\documentclass[12pt]{article}

\usepackage[margin=1in]{geometry}
\usepackage{amsmath,amsthm,amssymb,amsfonts}
\usepackage{mathtools}
\usepackage{hyperref}
\usepackage{url}
\usepackage[numbers,sort&compress]{natbib}
\usepackage{enumitem}
\usepackage{silence}
\usepackage[expansion=false]{microtype}
\usepackage{graphicx}
\usepackage{tikz}
\usetikzlibrary{arrows.meta,positioning}

\theoremstyle{plain}
\newtheorem{theorem}{Theorem}[section]
\newtheorem{proposition}[theorem]{Proposition}
\newtheorem{lemma}[theorem]{Lemma}
\newtheorem{corollary}[theorem]{Corollary}

\theoremstyle{definition}

\theoremstyle{remark}
\newtheorem{remark}[theorem]{Remark}

\newcommand{\R}{\mathbb{R}}
\newcommand{\C}{\mathbb{C}}
\newcommand{\tr}{\operatorname{tr}}

\newcommand{\doi}[1]{\textsc{doi:}~\texttt{#1}}
\DeclareMathOperator{\dt}{det}

\DeclareMathOperator{\ord}{ord}
\DeclareMathOperator{\Newt}{Newt}
\DeclareMathOperator{\Covol}{Covol}
\DeclareMathOperator{\covol}{covol}
\DeclareMathOperator{\MV}{MV}
\newcommand{\ip}[2]{\langle #1,\, #2\rangle}
\newcommand{\Gp}{\Gamma_{\!+}}

\title{Newton geometry of Bogdanov--Takens degeneracies\\
	in integrable dilatonic models:\\
	invariant divisors and boundary multiplicity}

\author{E.\ Chan-L\'opez\textsuperscript{1}\thanks{Corresponding author:
		\texttt{eduardo.clopez13@gmail.com}.\;\;\;
		\textsc{orcid}:~0009-0003-7712-7907.}
	\and A.\ Mart\'in-Ruiz\textsuperscript{2}
	\and J. M.\ Paulin Fuentes\textsuperscript{1}}
\date{%
	\textsuperscript{1}Divisi\'on Acad\'emica de Ciencias B\'asicas,
	Universidad Ju\'arez Aut\'onoma de Tabasco, 86690 Cunduac\'an,
	Tabasco, Mexico\\
	\textsuperscript{2}Instituto de Ciencias Nucleares, Universidad Nacional
	Aut\'onoma de M\'exico, 04510 Ciudad de M\'exico, Mexico}

\begin{document}
	\maketitle
	
	\begin{abstract}
		The Kantowski--Sachs interior of Grumiller's two-dimensional dilaton
		gravity model, in which $H$ is the expansion rate of the orbit
		two-spheres, $u$ their inverse areal radius and $Q$ minus twice the
		Rindler acceleration, is the planar family
		$\dot H=\tfrac12(\Lambda-3H^{2}-u^{2})+Qu$, $\dot u=-Hu$, which
		carries, for $Q\neq0$, two distinct rank-one nilpotent equilibria with
		\emph{complementary}
		Bogdanov--Takens (BT) degeneracies: one with $a=0$, $b\neq0$ and local
		multiplicity $\mu=3$, one with $a\neq0$, $b=0$ and $\mu=2$.  We prove
		that neither degeneracy is accidental, and that the two are complementary manifestations of a single
		divisor-organized structure.  The family is Darboux integrable:
		$X=\tfrac12u^{4}X_{I}$, where $I$ is the mass function and $u^{4}$ is
		an inverse integrating factor whose zero divisor is exactly the
		invariant axis $\{u=0\}$.  Off that divisor, we show that
		\emph{every} nilpotent equilibrium of a vector field of the form
		$R\,X_{I}$ with $R(p)\neq0$ has $b=0$, by a two-line argument using
		the intrinsic identity $b=D_{q_{0}}\tr DX$.  On that divisor, we show
		that \emph{every} nilpotent equilibrium of a field with an invariant
		coordinate axis has $a=0$, because $\dt DX$ restricted to the axis
		factors as $\Phi_{x}\,G$ and both factors vanish at the corner.
		Consequently every nilpotent equilibrium of the class satisfies
		$ab=0$, so the two-parameter Bogdanov--Takens unfolding is never versal
		within it. The Bernstein--Kushnirenko bound does not apply at either point; we compute both multiplicities exactly instead, from the
		elementary local identity $\mu_{0}=m_{1}\ell_{y}+m_{2}\ell_{x}$, valid
		whenever one component is a monomial times a unit, requiring no
		nondegeneracy hypothesis and returning $\mu=3$ and $\mu=2$.  Both
		numbers admit a mixed-covolume reading, for which we give a closed
		planar formula $\Covol=\sum_{k,i}\min(p_{k}q_{i}',p_{i}'q_{k})$ on
		convenient diagrams; whether a general Bernstein--Kushnirenko
		multiplicity theorem holds for the \emph{non-convenient} diagrams that
		actually occur here is left open.  For the class
		$\dot H=\tfrac{1-m}{2}H^{2}+\psi(u)$, $\dot u=-Hu$ with $\psi(0)=0$,
		$\psi'(0)\neq0$ and $m\neq1$, the vacuum is always a nilpotent point
		with $\mu=3$, $a=0$, $b=-m$, and the discriminant that classifies its
		nilpotent type, in the form of Dumortier--Llibre--Art\'es, is the
		perfect square $b_{1}^{2}+8a_{3}=(m-2)^{2}$: the
		vacuum always has one hyperbolic and one elliptic sector, for every
		$m>1$ and independently of the potential $\psi$.
	\end{abstract}
	
	\noindent\textbf{Keywords:} Bogdanov--Takens bifurcation, nilpotent
	singularity, dilaton gravity, Kantowski--Sachs, inverse integrating
	factor, Newton diagram, mixed covolume, coconvex body,
	Bernstein--Kushnirenko bound.
	
	\medskip
	\noindent\textbf{2020 Mathematics Subject Classification:}
	34C23, 37G10, 37G05, 14M25, 83C15.
	
	\section{Introduction}\label{sec:intro}
	
	The Bogdanov--Takens bifurcation \cite{Bogdanov1975,Takens1974} is the
	generic codimension-two bifurcation of a planar equilibrium with
	nilpotent linearization.  On the centre manifold the germ reduces to
	\begin{equation}\label{eq:BT}
		\dot\xi_{1}=\xi_{2},\qquad
		\dot\xi_{2}=a\,\xi_{1}^{2}+b\,\xi_{1}\xi_{2}+O(\|\xi\|^{3}),
	\end{equation}
	and the bifurcation is versal precisely when $ab\neq0$
	\cite{Kuznetsov2004}.  In a companion paper \cite{Companion} the two
	coefficients were characterized intrinsically: if $X$ is planar,
	$X(p)=0$, $J=DX(p)$ nilpotent of rank one and $q_{0}$ spans $\ker J$,
	then
	\begin{equation}\label{eq:intrinsic}
		a=-\tfrac12\,D_{q_{0}}\!\left(\dt DX\right)(p),
		\qquad
		b=D_{q_{0}}\!\left(\tr DX\right)(p),
	\end{equation}
	These identities are not confined to the plane.  In
	\cite{CompanionRn} they are generalized to $\R^{n}$: if $J=DX(p)$ has
	rank $n-1$ with $0$ an eigenvalue of algebraic multiplicity two, and
	$e_{k}(A)$ denotes the sum of the principal $k\times k$ minors of $A$,
	then the quadratic coefficients of the Bogdanov--Takens normal form on
	the centre manifold are
	\begin{equation}\label{eq:intrinsicRn}
		a=-\frac12\,\frac{D_{q_{0}}e_{n}}{e_{n-2}},
		\qquad
		b=\frac{D_{q_{0}}e_{n-1}}{e_{n-2}}
		-\frac{e_{n-3}\,D_{q_{0}}e_{n}}{e_{n-2}^{2}},
	\end{equation}
	the invariants being evaluated at $p$; for $n=2$ one has
	$e_{2}=\dt DX$, $e_{1}=\tr DX$, $e_{0}=1$ and $e_{-1}=0$, and
	\eqref{eq:intrinsicRn} collapses to \eqref{eq:intrinsic}.  Everything
	in the present paper takes place in the plane, so \eqref{eq:intrinsic}
	is what we use; we record \eqref{eq:intrinsicRn} because it fixes the
	sense in which $a$ and $b$ are \emph{directional derivatives of the
		characteristic invariants} rather than artefacts of a two-dimensional
	accident, and because the mechanisms isolated in
	Theorems~\ref{thm:B} and~\ref{thm:A} (a vanishing derivative of
	$\tr DX$ along $\ker J$, a factorization of $\dt DX$ along an invariant
	divisor) are statements about $e_{n-1}$ and $e_{n}$ that are
	meaningful in any dimension.
	
	Returning to the plane, $a\neq0$ is transversality of the kernel line $\ell$ to the fold
	set $\Sigma=\{\dt DX=0\}$, and $b\neq0$ is transversality of $\ell$ to
	the neutrality set $\mathcal H=\{\tr DX=0\}$.  The same work established
	a combinatorial obstruction: for Kolmogorov systems
	$X=(xA,yB)$ with $\MV(\Newt A,\Newt B)\le2$, every isolated
	\emph{interior} nilpotent equilibrium has local multiplicity $\mu=2$ and
	hence $a\neq0$; the nilpotent singularities of saddle, focus and
	elliptic type, which all require $\mu\ge3$, are unreachable.
	
	The present paper began as an application of \eqref{eq:intrinsic} to a
	gravitational family and turned into something else.  Consider the Kantowski--Sachs \cite{KantowskiSachs1966} interior of
	Grumiller's two-dimensional dilaton gravity model \cite{Grumiller2010},
	which in the variables of Section~\ref{sec:provenance} is the planar
	family
	\begin{equation}\label{eq:model}
		\dot H=\tfrac12\!\left(\Lambda-3H^{2}-u^{2}\right)+Qu,
		\qquad
		\dot u=-Hu,
		\qquad
		\lambda=(\Lambda,Q)\in\R^{2}.
	\end{equation}
	For $Q\neq0$ it carries two distinct rank-one nilpotent equilibria, and
	their BT data are \emph{complementary}:
	\begin{equation}\label{eq:table}
		\begin{array}{c|c|c|c|c}
			\text{point} & \text{parameters} & a & b & \mu\\\hline
			p_{0}=(0,0) & \Lambda=0 & 0 & -4 & 3\\
			p_{Q}=(0,Q) & \Lambda=-Q^{2} & Q/2 & 0 & 2
		\end{array}
	\end{equation}
	One point realizes the degeneracy $a=0$ forbidden by the mixed-volume
	obstruction of \cite{Companion}; the other realizes $b=0$.  This is the
	phenomenon the paper explains.
	
	Three structures do the explaining, and none of them is visible from the
	normal form alone.
	
	\smallskip
	\noindent\textbf{(i) The family is integrable.}  In
	Section~\ref{sec:integrable} we exhibit a Darboux first integral and an
	inverse integrating factor,
	\begin{equation}\label{eq:IintroA}
		I=\frac{H^{2}}{u^{3}}-\frac{\Lambda}{3u^{3}}-\frac{Q}{u^{2}}+\frac1u,
		\qquad
		X=\tfrac12 u^{4}\,X_{I},
		\qquad
		X_{I}=(\partial_{u}I,\,-\partial_{H}I),
	\end{equation}
	so that $X$ is a reparametrized Hamiltonian field away from $\{u=0\}$.
	Physically $I$ is the mass function, the conserved quantity every
	two-dimensional dilaton gravity possesses \cite{GKV2002}.
	
	\smallskip
	\noindent\textbf{(ii) Integrability forces $b=0$.}
	Theorem~\ref{thm:B} shows that if $X=R\,X_{I}$ and $p$ is a nilpotent
	equilibrium with $R(p)\neq0$, then $b=D_{q_{0}}\tr DX(p)=0$
	\emph{always}.  The proof is two lines from \eqref{eq:intrinsic}.  This
	accounts for $p_{Q}$, which lies off the divisor $\{u=0\}$.
	
	\smallskip
	\noindent\textbf{(iii) The invariant divisor forces $a=0$.}
	Theorem~\ref{thm:A} shows that if $\{y=0\}$ is invariant, i.e.
	$\dot y=y\,G$, then at a nilpotent equilibrium on that axis one has
	$\dt DX|_{y=0}=\Phi_{x}(x,0)\,G(x,0)$, a product of two factors that
	both vanish at the corner; hence $a=0$ and $\mu\ge3$ \emph{always}.
	This accounts for $p_{0}$.
	
	Together these give the paper's main structural statement,
	Theorem~\ref{thm:C}: every nilpotent equilibrium of the class satisfies
	$ab=0$, so the two-parameter unfolding \eqref{eq:BT} is never versal
	within the class.  We deliberately avoid the word ``codimension'' for
	this conclusion: what is proved is the vanishing of a product, not a
	transversality count, and the two points are in fact degenerate to very
	different degrees (Remark~\ref{rem:pQdegenerate}).  Which of the two
	degeneracies occurs is decided by a single divisor: the zero locus of
	the inverse integrating factor, which here coincides with the invariant
	axis.  The ``boundary'' referred to throughout is that divisor,
	$\{R=0\}$, and not the toric boundary of the ambient chart; the two
	should not be confused (Remark~\ref{rem:notinterior}).
	
	Section~\ref{sec:newton} turns to the combinatorial question, and here we
	are deliberately more cautious than the phenomenon invites.  The
	Bernstein--Kushnirenko bound \cite{Bernstein1975,Kouchnirenko1976} counts
	solutions in $(\C^{*})^{2}$ and is available at neither of our two
	points.  What we \emph{prove} is an elementary local identity
	(Proposition~\ref{prop:monomial}): if one component is a monomial times
	a unit, then $\mu_{0}=m_{1}\ell_{y}+m_{2}\ell_{x}$, where $\ell_{x}$ and
	$\ell_{y}$ are the intercepts of the Newton diagram of the other.  The
	identity is exact, needs no nondegeneracy hypothesis, and gives $\mu=3$
	and $\mu=2$ immediately.  What we then \emph{observe}
	(Section~\ref{sec:covolinterp}) is that both numbers coincide with a
	mixed covolume of Newton diagrams, the coconvex mixed volume of
	Khovanski\u\i--Timorin \cite{KhovanskiiTimorin2014}, for which we record
	a closed planar formula (Proposition~\ref{prop:minformula}).  This is an
	interpretation, not a theorem of this paper: the germs occurring at
	$p_{0}$ are \emph{not convenient}, so the local Bernstein--Kushnirenko
	statement we quote does not literally apply to them, and the coconvex
	theory of \cite{KhovanskiiTimorin2014} is developed for bounded coconvex
	bodies.  Establishing a general Bernstein--Kushnirenko multiplicity
	theorem for non-convenient Newton diagrams is left open.  With that
	caveat, the chain
	\[
	\text{Newton polytopes}\to\MV\to\mu\to\ord f\to a
	\]
	of \cite{Companion} has a plausible boundary counterpart with $\MV$
	replaced by the mixed covolume and the torus by the local ring.
	
	Section~\ref{sec:type} determines the topological type.  For the general
	class
	\begin{equation}\label{eq:generalclass}
		\dot H=\tfrac{1-m}{2}H^{2}+\psi(u),
		\qquad
		\dot u=-Hu,
		\qquad
		\psi(0)=0,\ \psi'(0)\neq0,\ m\neq1,
	\end{equation}
	which is integrable with inverse integrating factor $u^{m}$, the vacuum
	is automatically nilpotent with
	\[
	\mu=3,\qquad a=0,\qquad b=-m,
	\]
	and its quasi-homogeneous $3$-jet is
	$\dot x=y$, $\dot y=\tfrac{1-m}{2}x^{3}-m\,xy$.  The discriminant that
	decides its nilpotent type in the classification of
	Dumortier--Llibre--Art\'es \cite[Thm.~3.5]{DLA2006} is then the perfect
	square
	\begin{equation}\label{eq:square}
		b_{1}^{2}+8a_{3}=m^{2}+4(1-m)=(m-2)^{2}.
	\end{equation}
	Since a square is never negative, and since that classification covers
	the case of equality, for every $m>1$ the vacuum has one hyperbolic and
	one elliptic sector, independently of the potential $\psi$; the focus
	and centre cases are unreachable in the class.  The finite-codimension
	unfolding theory of \cite{DRS1991} is a separate matter, invoked only
	in Remark~\ref{rem:elliptic-stratum}: it assumes the strict inequality,
	so it identifies the codimension-three elliptic stratum for $m\neq2$
	alone.  For
	the physical value $m=4$ the discriminant equals $4$, and the elliptic
	sector is a one-parameter family of orbits leaving the vacuum and
	returning to it.  This is precisely one of the
	three types that the mixed-volume obstruction of \cite{Companion}
	excludes in the interior; it reappears, canonically, on the boundary.
	
	The question behind all of this is whether the nilpotent degeneracies of
	the dilatonic Kantowski--Sachs dynamics are accidents of particular
	parameter values or consequences of the structure of the theory.  They
	are structural.  The model carries a conserved mass function and an
	inverse integrating factor whose zero divisor is the invariant axis
	$\{u=0\}$, and these two structures constrain the intrinsic
	Bogdanov--Takens coefficients in complementary regions of the phase
	plane: off the divisor, integrability forces $b=0$; on it, the
	factorization of $\dt DX$ forces $a=0$.  Hence $ab=0$ throughout the
	class, and the generic two-parameter Bogdanov--Takens unfolding is
	unavailable within it.  At the vacuum the multiplicity $\mu=3$ and the
	perfect square \eqref{eq:square} turn that restriction into a definite
	phase portrait: an elliptic nilpotent singularity carrying a
	one-parameter family of trajectories that leave the static vacuum and
	return to it.  That homoclinic sector is the direct dynamical
	consequence of the integrable structure of the dilaton-gravity model.
	
	The remainder of the paper is organized as follows.
	Section~\ref{sec:model} derives the planar system from the underlying
	dilaton-gravity model and identifies its characteristic invariants and
	nilpotent equilibria.  Section~\ref{sec:integrable} establishes the
	Darboux-integrable structure through the conserved mass function and the
	inverse integrating factor.  Sections~\ref{sec:boffdivisor},
	\ref{sec:aoncorner} and~\ref{sec:dichotomy} isolate the two mechanisms
	responsible for the complementary degeneracies and combine them.
	Section~\ref{sec:newton} determines the local multiplicities from Newton
	diagrams and discusses their mixed-covolume interpretation.
	Section~\ref{sec:type} determines the local phase portrait of the vacuum
	and establishes its elliptic nilpotent character.
	Section~\ref{sec:algo} gives the symbolic-verification pipeline, and
	Section~\ref{sec:conclusions} summarizes the structural and physical
	consequences.
	
	\section{The model, its invariants, and two corrections}\label{sec:model}
	
	\subsection{Where the system comes from}\label{sec:provenance}
	
	The family \eqref{eq:model} is the Kantowski--Sachs reduction of
	Grumiller's two-dimensional dilaton gravity model \cite{Grumiller2010},
	and we record the derivation because the coefficient $Q$ is exactly the
	new coupling of that model.
	
	Grumiller's action, obtained by writing the most general
	spherically symmetric four-dimensional metric as
	$ds^{2}=g_{\alpha\beta}dx^{\alpha}dx^{\beta}
	+\Phi^{2}d\Omega^{2}$ and imposing power-counting renormalizability
	and analyticity on the resulting two-dimensional theory, is
	\begin{equation}\label{eq:grumaction}
		S=-\int d^{2}x\sqrt{-g}\,
		\bigl[\Phi^{2}R+2(\partial\Phi)^{2}
		-6\Lambda_{\mathrm G}\Phi^{2}+8a\Phi+2\bigr],
	\end{equation}
	depending on two constants: a cosmological constant
	$\Lambda_{\mathrm G}$ and a \emph{Rindler acceleration} $a$.  The term
	$8a\Phi$, linear in the dilaton, is the novel one.  A Birkhoff-like
	theorem holds, and in Schwarzschild gauge $\Phi=r$ the general solution
	is \cite{Grumiller2010}
	\begin{equation}\label{eq:grummetric}
		K^{2}(r)=1-\frac{2M}{r}-\Lambda_{\mathrm G}r^{2}+2ar .
	\end{equation}
	
	Inside a horizon, where $K^{2}<0$, the Killing field is spacelike and
	the metric takes the homogeneous Kantowski--Sachs form
	$ds^{2}=-dt^{2}+\alpha(t)^{2}dx^{2}+b(t)^{2}d\Omega^{2}$
	\cite{KantowskiSachs1966}, with $b$ the areal radius.  Put
	\begin{equation}\label{eq:physvars}
		H:=\frac{\dot b}{b},
		\qquad
		u:=\frac1b,
		\qquad
		\Lambda:=3\Lambda_{\mathrm G},
		\qquad
		Q:=-2a .
	\end{equation}
	Then $u$ is the square root of the Gaussian curvature of the orbit
	two-spheres, and $\dot u=-\dot b/b^{2}=-Hu$ identically: the second
	equation of \eqref{eq:model} is a kinematic identity, not a field
	equation.  The field equation in the $x$-direction reads
	\begin{equation}\label{eq:Gxx}
		2\frac{\ddot b}{b}+\Bigl(\frac{\dot b}{b}\Bigr)^{2}+\frac{1}{b^{2}}
		=\Lambda+\frac{2Q}{b},
	\end{equation}
	whose right-hand side is the only place the model departs from
	Einstein--$\Lambda$; substituting $\ddot b/b=\dot H+H^{2}$ and
	\eqref{eq:physvars} gives
	\[
	2\dot H+3H^{2}+u^{2}=\Lambda+2Qu,
	\]
	which is the first equation of \eqref{eq:model}.  Conversely, the first
	integral \eqref{eq:Iexplicit} of Section~\ref{sec:integrable} evaluates
	to $\dot b^{2}=2M/b-1+\Lambda b^{2}/3+Qb=-K^{2}(b)$ with $M$ the
	integration constant, which is the standard relation between the
	homogeneous interior and the static exterior and reproduces
	\eqref{eq:grummetric} under \eqref{eq:physvars}.  Thus $I=2M$, twice
	the conserved mass parameter of the static solution
	\eqref{eq:grummetric}; we do not identify $M$ with the
	four-dimensional Misner--Sharp mass, an identification that would not
	survive the presence of $\Lambda$ and $Q$.  The chain
	\[
	\text{action \eqref{eq:grumaction}}
	\to\text{Birkhoff solution \eqref{eq:grummetric}}
	\to\text{KS interior}
	\to\eqref{eq:model}
	\]
	closes.
	
	Two caveats, stated once and not repeated.  First, the identification
	$Q=-2a$ is with the Rindler acceleration of \cite{Grumiller2010}, a
	coupling of an effective long-distance model, not with an electric or
	dilatonic charge; a Maxwell field would contribute a term in $u^{4}$,
	not in $u$, and would leave the class \eqref{eq:class}.  Second, the
	mathematical results below (Theorems~\ref{thm:B}, \ref{thm:A},
	\ref{thm:C} and \ref{thm:elliptic}) use only the hypotheses stated in
	them and are independent of this interpretation; the reader who prefers
	to regard $Q$ as a free deformation parameter loses nothing.
	
	One point of domain, made once.  The physical Kantowski--Sachs
	interpretation has $b>0$, hence $u>0$; the mathematical analysis below
	is carried out on the full analytic $(H,u)$-plane, of which the sector
	$u>0$ is one component of the complement of the invariant axis.  The
	invariance of $\{u=0\}$ is a consequence of the second equation of
	\eqref{eq:model}, $\dot u=-Hu$, and of nothing physical.
	
	\subsection{Invariants}
	
	Write $X=(F_{1},F_{2})$ with
	\begin{equation}
		F_{1}(H,u)=\tfrac12\!\left(\Lambda-3H^{2}-u^{2}\right)+Qu,
		\qquad
		F_{2}(H,u)=-Hu.
	\end{equation}
	
	\begin{lemma}\label{lem:invariants}
		The Jacobian, trace and determinant of \eqref{eq:model} are
		\begin{equation}\label{eq:JTD}
			DX=\begin{pmatrix}-3H & Q-u\\[2pt] -u & -H\end{pmatrix},
			\qquad
			\tr DX=-4H,
			\qquad
			\dt DX=3H^{2}+Qu-u^{2},
		\end{equation}
		and therefore
		$\nabla\tr DX=(-4,0)$, $\nabla\dt DX=(6H,\;Q-2u)$.
	\end{lemma}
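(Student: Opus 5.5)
The lemma is a direct computation, and I would carry it out in three steps.

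First, differentiate the components
$F_{1}=\tfrac12(\Lambda-3H^{2}-u^{2})+Qu$ and $F_{2}=-Hu$ with respect to the ordered coordinates $(H,u)$. This gives
$\partial_{H}F_{1}=-3H$, $\partial_{u}F_{1}=Q-u$, $\partial_{H}F_{2}=-u$ and $\partial_{u}F_{2}=-H$. Arranged as rows $(\partial_{H}F_{i},\partial_{u}F_{i})$, these entries are the matrix $DX$ in \eqref{eq:JTD}.

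Second, read off the two invariants. The trace is the sum of the diagonal entries, $-3H-H=-4H$. The determinant is
$(-3H)(-H)-(Q-u)(-u)=3H^{2}+Qu-u^{2}$, where the only care needed is the sign of the off-diagonal product.

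Third, take gradients in the same coordinate order $(H,u)$. From $\tr DX=-4H$ we get $\nabla\tr DX=(-4,0)$. From $\dt DX=3H^{2}+Qu-u^{2}$ we get $\nabla\dt DX=(6H,\,Q-2u)$.

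There is no real obstacle here. The only point to keep consistent is the ordering convention, with $H$ as the first coordinate and $u$ as the second. That convention is what later uses need when pairing these gradients with the kernel vector $q_{0}$ in \eqref{eq:intrinsic}.

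As a consistency check, I would evaluate at $p_{Q}=(0,Q)$. There $\det=Q^{2}-Q^{2}=0$ and $\tr=0$, so the Jacobian is nilpotent, in agreement with table \eqref{eq:table}.
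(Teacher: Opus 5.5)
Your proposal is correct and follows the paper's own proof, which is direct differentiation with the determinant expanded as $(-3H)(-H)-(Q-u)(-u)=3H^{2}+u(Q-u)$. The entries, invariants, gradients and the check at $p_{Q}$ all agree with the paper.
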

	
	\begin{proof}
		Direct differentiation;
		$\dt DX=(-3H)(-H)-(Q-u)(-u)=3H^{2}+u(Q-u)$.
	\end{proof}
	
	\begin{remark}[Two sign corrections]\label{rem:signs}
		It is worth recording two errata relative to the exploratory notes
		that motivated this work.  First, the determinant is
		$3H^{2}+Qu-u^{2}$ and \emph{not} $3H^{2}+u^{2}-Qu$; the two differ by
		the sign of the off-diagonal contribution.  Second, and as a
		consequence, the second coefficient at $p_{Q}$ is $a=+Q/2$, not
		$-Q/2$.  Neither correction affects the location of the nilpotent
		points, because on $\{H=0\}$ both expressions give
		$\dt DX(0,u)=u(Q-u)$; nor does it affect $a(p_{0})=0$, which is
		forced by $\partial_{H}\dt DX=6H$ vanishing at $H=0$.  All entries of
		\eqref{eq:table} other than the sign of $a(p_{Q})$ are unchanged.
	\end{remark}
	
	Note two facts that will organise everything below.  The neutrality set
	is the \emph{straight line}
	\begin{equation}
		\mathcal H=\{\tr DX=0\}=\{H=0\},
	\end{equation}
	and the axis $\{u=0\}$ is invariant, since $F_{2}=-Hu$ is divisible by
	$u$.  The axis $\{H=0\}$ is \emph{not} invariant, because $F_{1}$
	contains the term $Qu$.  The family therefore has exactly one invariant
	coordinate divisor: it is the minimal breaking of the Kolmogorov
	structure, and this turns out to be exactly what permits a nilpotent
	point at the corner (Remark~\ref{rem:kolmogorov}).
	
	\subsection{The nilpotent equilibria}\label{sec:equilibria}
	
	Equilibria satisfy $Hu=0$.  Throughout this subsection $Q\neq0$; for
	$Q=0$ the two points below coincide and the Jacobian at the origin is
	not the rank-one nilpotent matrix the Bogdanov--Takens theory requires.
	
	\smallskip
	\noindent\textbf{The vacuum.}  The origin is an equilibrium iff
	$\Lambda=0$.  For $\Lambda=0$, $Q\neq0$,
	\[
	J_{0}=\begin{pmatrix}0&Q\\0&0\end{pmatrix},
	\qquad
	\ker J_{0}=\R\,q_{0},\quad q_{0}=(1,0)^{T},
	\]
	which is nilpotent of rank one.  From Lemma~\ref{lem:invariants},
	$\nabla\dt DX(0,0)=(0,Q)$ and $\nabla\tr DX=(-4,0)$, so
	\eqref{eq:intrinsic} gives
	\begin{equation}
		a(p_{0})=-\tfrac12\ip{(0,Q)}{(1,0)}=0,
		\qquad
		b(p_{0})=\ip{(-4,0)}{(1,0)}=-4 .
	\end{equation}
	Observe that $q_{0}$ is the tangent direction of the \emph{invariant}
	axis $\{u=0\}$.
	
	\smallskip
	\noindent\textbf{The horizon branch.}  On $H=0$, $\dt DX=u(Q-u)$, so
	nilpotency off the vacuum forces $u=Q$, and the equilibrium condition
	$\Lambda-u^{2}+2Qu=0$ then forces $\Lambda=-Q^{2}$.  At
	$p_{Q}=(0,Q)$,
	\[
	J_{Q}=\begin{pmatrix}0&0\\-Q&0\end{pmatrix},
	\qquad
	\ker J_{Q}=\R\,q_{0},\quad q_{0}=(0,1)^{T},
	\]
	and $\nabla\dt DX(0,Q)=(0,-Q)$, whence
	\begin{equation}
		a(p_{Q})=-\tfrac12\ip{(0,-Q)}{(0,1)}=\frac Q2,
		\qquad
		b(p_{Q})=\ip{(-4,0)}{(0,1)}=0 .
	\end{equation}
	For $Q>0$ the point $p_{Q}$ lies in the physical sector $u>0$; for
	$Q<0$ it belongs to the analytic extension of the system.  Nothing
	below distinguishes the two cases.  Here $q_{0}$ is tangent to
	$\mathcal H=\{H=0\}$, which is why $b$ vanishes; this will be upgraded to a theorem in
	Section~\ref{sec:boffdivisor}.
	
	\begin{remark}[$p_{Q}$ is not an interior point either]\label{rem:notinterior}
		It is tempting to call $p_{Q}$ an ``interior'' equilibrium because
		$u=Q\neq0$.  This is only half true and the distinction matters.  In
		the original chart $p_{Q}$ lies on the coordinate line $\{H=0\}$, so
		it is not in the torus $(\C^{*})^{2}$; after the shift $v=u-Q$ it sits
		at the origin of the local chart, exactly like $p_{0}$.  What
		distinguishes the two points is not membership of the torus but
		membership of the \emph{invariant} divisor $\{u=0\}$, equivalently of
		the zero set of the inverse integrating factor.  Every statement below
		is organised by that divisor, not by the toric boundary of the
		ambient chart.
	\end{remark}
	
	\section{The family is Darboux integrable}\label{sec:integrable}
	
	\begin{proposition}\label{prop:integrable}
		Let $m\in\R$, let $\psi$ be analytic, and consider
		\begin{equation}\label{eq:class}
			X:\qquad
			\dot x=\Phi(x,y):=\tfrac{1-m}{2}x^{2}+\psi(y),
			\qquad
			\dot y=-xy .
		\end{equation}
		Then $V(x,y)=y^{m}$ is an inverse integrating factor for $X$, i.e.
		$\operatorname{div}(V^{-1}X)=0$, and
		\begin{equation}\label{eq:firstintegral}
			I(x,y)=\tfrac12 x^{2}y^{1-m}+\int^{y}\!\psi(s)\,s^{-m}\,ds
		\end{equation}
		is a first integral of $X$ on $\{y\neq0\}$.  Equivalently
		$X=V\,X_{I}$ with $X_{I}=(\partial_{y}I,-\partial_{x}I)$.
		Conversely, within the class $\dot y=-xy$, a monomial inverse
		integrating factor $y^{m}$ exists if and only if
		$\partial_{x}\Phi=(1-m)x$, i.e. if and only if $\Phi$ has the form
		\eqref{eq:class}.
	\end{proposition}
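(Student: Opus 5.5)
The plan is a direct divergence computation, followed by a check that $I$ is consistent with it, and finally the converse by reading the same computation backwards. On $\{y\neq0\}$ put $V=y^{m}$ and compute
\[
\operatorname{div}(V^{-1}X)=\partial_{x}\bigl(y^{-m}\Phi\bigr)+\partial_{y}\bigl(-x\,y^{1-m}\bigr)
=y^{-m}\,\partial_{x}\Phi-(1-m)\,x\,y^{-m}.
\]
With $\Phi=\tfrac{1-m}{2}x^{2}+\psi(y)$ we have $\partial_{x}\Phi=(1-m)x$, and the two terms cancel. This gives $\operatorname{div}(V^{-1}X)=0$. (Equivalently one checks the standard identity $X(V)=V\operatorname{div}X$; both sides equal $-m\,x\,y^{m}$, since $\operatorname{div}X=(1-m)x-x=-mx$.) Note that $\psi$ drops out because it does not depend on $x$.

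Next I verify the explicit first integral. From \eqref{eq:firstintegral},
\[
\partial_{x}I=x\,y^{1-m},\qquad
\partial_{y}I=\tfrac{1-m}{2}x^{2}y^{-m}+\psi(y)\,y^{-m}=y^{-m}\Phi .
\]
Hence
\[
V X_{I}=y^{m}\bigl(\partial_{y}I,\,-\partial_{x}I\bigr)=\bigl(\Phi,\,-xy\bigr)=X .
\]
Then $X(I)=V\bigl(\partial_{y}I\,\partial_{x}I-\partial_{x}I\,\partial_{y}I\bigr)=0$, so $I$ is a first integral on $\{y\neq0\}$. The integral $\int^{y}\psi(s)s^{-m}\,ds$ is understood on each half-line $y>0$ and $y<0$ separately. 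For noninteger $m$ I interpret $y^{m}$ as $|y|^{m}$, or restrict to $y>0$; the computation is unchanged, since $\partial_{y}|y|^{m}=m|y|^{m}/y$. Finally, $\operatorname{div}X_{I}=0$ is automatic, which recovers the first claim a second time.

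For the converse, I take $\dot y=-xy$ and an arbitrary analytic $\Phi$. The divergence computation above gives
\[
\operatorname{div}(y^{-m}X)=y^{-m}\bigl(\partial_{x}\Phi-(1-m)x\bigr)
\]
on $\{y\neq0\}$. This vanishes there if and only if $\partial_{x}\Phi=(1-m)x$ on $\{y\neq0\}$, and by continuity and analyticity everywhere. Integrating in $x$ gives $\Phi=\tfrac{1-m}{2}x^{2}+\psi(y)$, with $\psi(y)=\Phi(0,y)$ analytic. The only point needing care is the domain: the condition is imposed on $\{y\neq0\}$ and must be extended across the axis, which analyticity of $\Phi$ handles. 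I would also mention that one could instead require the identity $X(V)=V\operatorname{div}X$ on the whole plane. That form gives the same condition after dividing by $y^{m}$ off the axis, so the two formulations of "inverse integrating factor" agree here.
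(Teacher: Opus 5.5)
Your proposal is correct and follows essentially the same route as the paper: a direct computation giving $\operatorname{div}X=-mx$, the check $V\,X_{I}=X$ from $\partial_{x}I=xy^{1-m}$ and $\partial_{y}I=y^{-m}\Phi$, and the converse obtained by reading the same identity backwards (the paper phrases the condition as $X\cdot\nabla V=V\operatorname{div}X$ rather than $\operatorname{div}(V^{-1}X)=0$, an equivalence you also note). Your remarks on extending $\partial_{x}\Phi=(1-m)x$ across $\{y=0\}$ by analyticity and on interpreting $y^{m}$ for noninteger $m$ are harmless refinements that the paper leaves implicit.
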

	
	\begin{proof}
		$\operatorname{div}X=\Phi_{x}+\partial_{y}(-xy)=(1-m)x-x=-mx$.  For
		$V=y^{m}$, $X\!\cdot\!\nabla V=(-xy)\,my^{m-1}=-mx\,V
		=V\operatorname{div}X$, which is exactly the inverse-integrating-factor
		equation.  For \eqref{eq:firstintegral}, $\partial_{x}I=xy^{1-m}$, so
		$V\,(-\partial_{x}I)=-xy=\dot y$; and
		$\partial_{y}I=\tfrac{1-m}{2}x^{2}y^{-m}+\psi(y)y^{-m}
		=\Phi\,y^{-m}$, so $V\,\partial_{y}I=\Phi=\dot x$.  The converse is
		the computation $X\!\cdot\!\nabla(y^{m})=V\operatorname{div}X
		\iff -mx=\Phi_{x}-x$.
	\end{proof}
	
	The model \eqref{eq:model} is the case $m=4$,
	$\psi(u)=\tfrac12(\Lambda-u^{2})+Qu$, and then
	\eqref{eq:firstintegral} becomes, after multiplication by $2$,
	\begin{equation}\label{eq:Iexplicit}
		I=\frac{H^{2}}{u^{3}}-\frac{\Lambda}{3u^{3}}-\frac{Q}{u^{2}}+\frac1u
		=\frac{3H^{2}-\Lambda-3Qu+3u^{2}}{3u^{3}},
		\qquad
		X=\tfrac12u^{4}\,X_{I}.
	\end{equation}
	By Section~\ref{sec:provenance}, $I=2M$ with $M$ the conserved mass
	parameter of the static solution \eqref{eq:grummetric}; that some such
	conserved quantity exists for \emph{every} two-dimensional dilaton
	gravity is standard \cite{GKV2002}, and
	Proposition~\ref{prop:integrable} identifies the exponent $m$ of the
	associated inverse integrating factor with the coefficient of $H^{2}$ in
	the Raychaudhuri equation.  The consistency check
	$\operatorname{div}X=-mH=-4H=\tr DX$ recovers Lemma~\ref{lem:invariants}.
	
	\section{Integrability forces $b=0$ off the divisor}\label{sec:boffdivisor}
	
	\begin{theorem}\label{thm:B}
		Let $U\subseteq\R^{2}$ be open, let $I,R$ be analytic on $U$
		(for the identity below $I,R\in C^{2}(U)$ suffices; analyticity is
		assumed so that $a$ and $b$ are the normal-form coefficients of
		\eqref{eq:BT}), and let
		\[
		X=R\,X_{I},
		\qquad
		X_{I}=\bigl(\partial_{y}I,\;-\partial_{x}I\bigr).
		\]
		Let $p\in U$ be an equilibrium of $X$ with $R(p)\neq0$, and suppose
		$J=DX(p)$ is nilpotent and nonzero, with $q_{0}$ spanning $\ker J$.
		Then
		\[
		b=D_{q_{0}}\bigl(\tr DX\bigr)(p)=0 .
		\]
		In particular no vector field admitting a nonvanishing inverse
		integrating factor near $p$ can have a Bogdanov--Takens point with
		$b\neq0$ at $p$.
	\end{theorem}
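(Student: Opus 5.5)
The plan is to compute $\tr DX$ for $X=R\,X_{I}$ globally on $U$ as an identity of functions, and only then differentiate along $q_{0}$ at $p$. The key input is that the Hamiltonian field $X_{I}=(\partial_{y}I,-\partial_{x}I)$ is divergence-free: $\operatorname{div}X_{I}=\partial_{x}\partial_{y}I-\partial_{y}\partial_{x}I=0$ for $I\in C^{2}$. Since the trace of the Jacobian is the divergence, the product rule gives
\[
\tr DX=\operatorname{div}(R\,X_{I})=\ip{\nabla R}{X_{I}}+R\,\operatorname{div}X_{I}=\ip{\nabla R}{X_{I}}
\]
on all of $U$. This is the first step; it requires only $R\in C^{1}$ and $I\in C^{2}$ for the identity itself.

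Second, I record two pointwise facts at $p$, both of which use the hypothesis $R(p)\neq0$.
\begin{itemize}
\item From $0=X(p)=R(p)X_{I}(p)$ we get $X_{I}(p)=0$.
\item Differentiating $X=R\,X_{I}$ gives $DX=R\,DX_{I}+X_{I}\,(\nabla R)^{T}$. At $p$ the second term vanishes, so $J=R(p)\,DX_{I}(p)$. Hence $DX_{I}(p)q_{0}=R(p)^{-1}Jq_{0}=0$.
\end{itemize}

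Third, I differentiate the identity from the first step along $q_{0}$ at $p$. This needs $R,I\in C^{2}$, which is the regularity stated in the theorem:
\[
b=D_{q_{0}}(\tr DX)(p)=\ip{D^{2}R(p)\,q_{0}}{X_{I}(p)}+\ip{\nabla R(p)}{DX_{I}(p)\,q_{0}}.
\]
Both terms vanish by the two facts above, so $b=0$. The identification of $b$ with the normal-form coefficient is supplied by \eqref{eq:intrinsic}, and this is where analyticity and the nonzero nilpotent $J$ enter.

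The final sentence of the theorem follows by taking $R$ to be the nonvanishing inverse integrating factor $V$. Indeed $\operatorname{div}(V^{-1}X)=0$ makes $V^{-1}X$ locally Hamiltonian, say $V^{-1}X=X_{I}$ on a simply connected neighbourhood of $p$ by the Poincaré lemma. Hence $X=V\,X_{I}$ with $V(p)\neq0$, and the theorem applies.

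I do not expect a genuine obstacle. The one point needing care is the second step: the term $X_{I}(\nabla R)^{T}$ in $DX$ must be killed at $p$, and this is precisely where $R(p)\neq0$ is used. On the divisor $\{R=0\}$ the factor $X_{I}(p)$ need not vanish, and the argument correctly breaks down there.
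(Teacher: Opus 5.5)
Your proposal is correct and matches the paper's proof step for step. You use the global identity $\tr DX=\ip{\nabla R}{X_{I}}$, which follows from $\tr DX_{I}\equiv0$, and then $X_{I}(p)=0$ together with $J=R(p)\,DX_{I}(p)$ gives $DX_{I}(p)q_{0}=0$, which kills the derivative along $q_{0}$. Your Poincar\'e-lemma remark, that a nonvanishing inverse integrating factor gives $X=V\,X_{I}$ locally, is a correct detail for the final sentence that the paper leaves implicit.
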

	
	\begin{proof}
		Since $R(p)\neq0$ and $X(p)=R(p)X_{I}(p)=0$, we get $X_{I}(p)=0$.
		Differentiating $X=R\,X_{I}$,
		\begin{equation}\label{eq:leibniz}
			DX=R\,DX_{I}+X_{I}\otimes\nabla R .
		\end{equation}
		Taking traces and using $\tr DX_{I}\equiv0$ (a Hamiltonian field is
		divergence free) gives the \emph{global} identity
		\begin{equation}\label{eq:trRXI}
			\tr DX=\ip{\nabla R}{X_{I}}\qquad\text{on }U .
		\end{equation}
		Differentiating \eqref{eq:trRXI} at $p$ and using $X_{I}(p)=0$,
		\[
		\nabla\bigl(\tr DX\bigr)(p)
		=\bigl(DX_{I}(p)\bigr)^{T}\nabla R(p).
		\]
		Hence, for any vector $q$,
		\[
		D_{q}\bigl(\tr DX\bigr)(p)
		=\ip{\bigl(DX_{I}(p)\bigr)^{T}\nabla R(p)}{q}
		=\ip{\nabla R(p)}{DX_{I}(p)\,q}.
		\]
		Finally, by \eqref{eq:leibniz} and $X_{I}(p)=0$ we have
		$J=DX(p)=R(p)\,DX_{I}(p)$ with $R(p)\neq0$, so
		$\ker DX_{I}(p)=\ker J$.  Choosing $q=q_{0}\in\ker J$ gives
		$DX_{I}(p)q_{0}=0$ and therefore $b=0$.
	\end{proof}
	
	\begin{corollary}\label{cor:pQ}
		Every nilpotent equilibrium of \eqref{eq:class} lying off the axis
		$\{y=0\}$ has $b=0$.  In particular $b(p_{Q})=0$ for the dilatonic
		model \eqref{eq:model}, for all $Q\neq0$.
	\end{corollary}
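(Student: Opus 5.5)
The corollary follows by feeding Proposition~\ref{prop:integrable} into Theorem~\ref{thm:B}. For the class \eqref{eq:class}, take $U=\{y\neq0\}$, or more precisely a connected component of it containing the nilpotent equilibrium $p$. Set $R=V=y^{m}$ and let $I$ be the first integral \eqref{eq:firstintegral}. Proposition~\ref{prop:integrable} gives the factorization $X=R\,X_{I}$ on $U$. There $R$ and $I$ are analytic: $y^{m}$ and $y^{1-m}$ are analytic on each half-plane $y>0$ and $y<0$ for real $m$, taking $|y|^{m}$ with the appropriate sign convention if $m$ is not an integer, and $\int^{y}\psi(s)s^{-m}\,ds$ is analytic away from $s=0$. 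Moreover $R(p)=y_{p}^{m}\neq0$ because $p$ lies off the axis.

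Next we check the remaining hypotheses of Theorem~\ref{thm:B}. The point $p$ is an equilibrium with $DX(p)$ nilpotent and nonzero; this is what ``nilpotent equilibrium'' means throughout, namely rank one as in the Bogdanov--Takens setting of \eqref{eq:intrinsic}. Theorem~\ref{thm:B} then gives $b=D_{q_{0}}(\tr DX)(p)=0$ directly.

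One subtlety must be handled. Theorem~\ref{thm:B} is stated for $X_{I}$ with $I$ defined on an open set, and the sign and branch of $y^{m}$ must be chosen consistently with the identity $X=VX_{I}$. Proposition~\ref{prop:integrable} verifies that identity purely algebraically. We therefore fix the local branch on the half-plane containing $p$ and note that the identity holds there. Any constant factor arising from the branch choice can be absorbed into $I$, since scaling $I$ by a nonzero constant rescales $X_{I}$ and hence $R$ by the reciprocal constant, which preserves $R(p)\neq0$.

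For the specialization, the model \eqref{eq:model} is the case $m=4$, $\psi(u)=\tfrac12(\Lambda-u^{2})+Qu$, with $X=\tfrac12u^{4}X_{I}$ by \eqref{eq:Iexplicit}. Here $m=4$ is an integer, so no branch issue arises. The point $p_{Q}=(0,Q)$ has $u=Q\neq0$, so it lies off $\{u=0\}$, and it is a rank-one nilpotent equilibrium by Section~\ref{sec:equilibria}. Hence $b(p_{Q})=0$ for every $Q\neq0$. This agrees with the direct computation $\ip{(-4,0)}{(0,1)}=0$, which serves as a consistency check.

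The only step requiring any care is confirming analyticity of $R$ and $I$ near $p$ for non-integer $m$. The identity in Theorem~\ref{thm:B} in fact needs only $C^{2}$ regularity, and that holds trivially on $\{y\neq0\}$, so this step is not a real obstacle.
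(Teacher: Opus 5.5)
Your proposal is correct and is exactly the paper's argument: apply Theorem~\ref{thm:B} with $R=V=y^{m}$ and $I$ from \eqref{eq:firstintegral}, using $R(p)\neq0$ off the axis, then specialize to $m=4$ at $p_{Q}=(0,Q)$. Your extra remark about taking the real branch $|y|^{m}$ on the half-plane $y<0$ for non-integer $m$ (with $I$ adjusted accordingly, or a constant absorbed into $I$) is a harmless refinement that the paper leaves implicit.
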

	
	\begin{proof}
		Apply Theorem~\ref{thm:B} with $R=V=y^{m}$, which is nonzero at any
		point with $y\neq0$, and $I$ from \eqref{eq:firstintegral}.
	\end{proof}
	
	\begin{remark}
		The mechanism is transparent in the language of \eqref{eq:intrinsic}.
		Identity \eqref{eq:trRXI} says that the neutrality set $\mathcal H$ of
		an integrable field is the zero set of $\ip{\nabla R}{X_{I}}$; at an
		equilibrium off $\{R=0\}$ the field $X_{I}$ vanishes, so $\mathcal H$
		is singular there in the direction of $\ker J$, and the kernel line
		$\ell$ is automatically tangent to it.  For the dilatonic model,
		$\nabla R=(0,2u^{3})$ and $X_{I}=(\partial_{u}I,-\partial_{H}I)$, so
		$\tr DX=-2u^{3}\partial_{H}I=-4H$, and $\mathcal H=\{H=0\}$ is exactly
		the locus where $I$ is critical in the $H$ direction.  The straightness
		of $\mathcal H$ noted in Section~\ref{sec:model} is a shadow of
		integrability, not a coincidence of the model.
	\end{remark}
	
	\subsection{Local consequence at $p_{Q}$}
	
	Set $x=v=u-Q$ so that $p_{Q}$ is the origin.  Then
	\begin{equation}\label{eq:localpQ}
		\dot H=-\tfrac32H^{2}-\tfrac12v^{2},
		\qquad
		\dot v=-H(Q+v).
	\end{equation}
	Since $\partial_{H}\dot v=-Q\neq0$, put $x=v$ and $y=\dot v=-H(Q+v)$,
	i.e. $H=-y/(Q+x)$.  A direct computation gives the Takens-type form
	\begin{equation}\label{eq:pQreduced}
		\dot x=y,
		\qquad
		\dot y=\underbrace{\tfrac{Q}{2}x^{2}+\tfrac12x^{3}}_{=:F(x)}
		+\frac{5}{2(Q+x)}\,y^{2}.
	\end{equation}
	Following the standard classification of nilpotent points
	\cite[Thm.~3.5]{DLA2006}, with $A\equiv0$, $f\equiv0$,
	$F(x)=\tfrac{Q}{2}x^{2}+\tfrac12x^{3}$ and
	$G(x)=\partial_{y}B(x,0)\equiv0$, the origin is a \emph{cusp} of
	multiplicity $\ord F=2$, and the identical vanishing of $G$ is the
	reason $b=0$.  Note that \eqref{eq:pQreduced} is of the form
	$\dot y=F(x)+h(x)y^{2}$, which linearises in $w=y^{2}$; this is the
	local avatar of the first integral \eqref{eq:Iexplicit}, analytic at
	$p_{Q}$ because $u=Q\neq0$ there.
	
	\begin{remark}[$p_{Q}$ is not the codimension-three cusp]
		\label{rem:pQdegenerate}
		The distinction between $G'(0)=0$ and $G\equiv0$ must not be blurred.
		The cusp of codimension three of \cite{DRS1987} is the germ with
		$\ord F=2$ and $G$ vanishing at the origin \emph{to order exactly
			two}; its generic three-parameter unfolding is the object of that
		paper.  At $p_{Q}$ the reduction \eqref{eq:pQreduced} has
		$G\equiv0$ identically, not merely $G'(0)=0$, so $p_{Q}$ is
		\emph{not} an instance of that classification.  The vanishing is not
		an artefact of the chart: by Proposition~\ref{prop:integrable} the
		germ at $p_{Q}$ admits an analytic first integral, and a germ with an
		analytic first integral imposes infinitely many independent conditions
		on its jets.  Topologically $p_{Q}$ is still a cusp; as a point of the
		space of germs it lies in the integrable stratum and is degenerate to
		infinite order in the direction that $b$ measures.  Accordingly we
		claim for it only what Theorem~\ref{thm:C} proves, namely $b=0$, and
		we attach no finite codimension to it.
	\end{remark}
	
	\section{The invariant divisor forces $a=0$ at the corner}\label{sec:aoncorner}
	
	\begin{theorem}\label{thm:A}
		Let $X=(\Phi,\Psi)$ be an analytic planar vector field near the origin
		for which the axis $\{y=0\}$ is invariant, i.e.
		$\Psi(x,y)=y\,G(x,y)$.  Assume $X(0)=0$, that $J=DX(0)$ is nilpotent
		and nonzero, and that the origin is an isolated equilibrium.  Then:
		\begin{enumerate}[label=\textup{(\roman*)},leftmargin=2.2em]
			\item $\Phi_{x}(0)=0$, $\Phi_{y}(0)\neq0$, $G(0)=0$, and
			$\ker J=\R\,(1,0)^{T}$; the kernel line is tangent to the
			invariant axis.
			\item Along the axis, $\dt DX(x,0)=\Phi_{x}(x,0)\,G(x,0)$, which
			vanishes at $x=0$ to order at least two.
			\item Consequently
			\[
			a=-\tfrac12\,D_{q_{0}}\bigl(\dt DX\bigr)(0)=0,
			\qquad
			b=\Phi_{xx}(0)+G_{x}(0).
			\]
			\item The local multiplicity satisfies
			\[
			\mu_{0}(X)=\ord_{x}\Phi(x,0)+I_{0}(\Phi,G)\;\ge\;3 ,
			\]
			where $I_{0}$ denotes the local intersection number at the
			origin.
		\end{enumerate}
	\end{theorem}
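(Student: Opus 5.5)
The plan is a direct computation from the structure $\Psi=yG$: items (i)–(iii) follow by evaluating the Jacobian along the invariant axis, and item (iv) by additivity of local intersection numbers.

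\emph{Step 1: the Jacobian and item (i).} Differentiating gives
\[
DX=\begin{pmatrix}\Phi_{x}&\Phi_{y}\\ yG_{x}&G+yG_{y}\end{pmatrix},
\qquad
J=DX(0)=\begin{pmatrix}\Phi_{x}(0)&\Phi_{y}(0)\\ 0&G(0)\end{pmatrix}.
\]
The matrix $J$ is upper triangular, so nilpotency forces both diagonal entries to vanish: $\Phi_{x}(0)=0$ and $G(0)=0$. Then $J\neq0$ forces $\Phi_{y}(0)\neq0$. Hence $\ker J=\R(1,0)^{T}$, which is the tangent direction of the invariant axis.

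\emph{Step 2: items (ii) and (iii).} On $\{y=0\}$ the lower-left entry $yG_{x}$ vanishes and the lower-right entry reduces to $G$. Therefore
\[
\dt DX(x,0)=\Phi_{x}(x,0)\,G(x,0).
\]
Both factors vanish at $x=0$ by Step 1, so the product vanishes to order at least two.

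With $q_{0}=(1,0)^{T}$, the derivative $D_{q_{0}}$ is $\partial_{x}$, and it can be computed along the axis itself. This gives
\[
a=-\tfrac12\bigl(\Phi_{xx}G+\Phi_{x}G_{x}\bigr)(0)=0 .
\]
Next, $\tr DX=\Phi_{x}+G+yG_{y}$, so
\[
b=\partial_{x}\tr DX(0)=\Phi_{xx}(0)+G_{x}(0),
\]
because the term $y\,G_{xy}$ drops out at $y=0$. Both values come from the intrinsic formulas \eqref{eq:intrinsic} of \cite{Companion}, which are taken as given.

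\emph{Step 3: item (iv).} I take $\mu_{0}(X)=I_{0}(\Phi,\Psi)$, the dimension of $\mathcal O_{0}/(\Phi,\Psi)$ in the ring of convergent power series. If needed, I complexify first; for real analytic germs this does not change the dimension. Isolatedness of the equilibrium makes this dimension finite.

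By additivity of intersection multiplicity in a factored argument,
\[
I_{0}(\Phi,yG)=I_{0}(\Phi,y)+I_{0}(\Phi,G),
\qquad
I_{0}(\Phi,y)=\dim\mathcal O_{1}/(\Phi(x,0))=\ord_{x}\Phi(x,0).
\]
This order is finite: if $\Phi(x,0)\equiv0$, the whole axis would consist of equilibria, contradicting isolatedness.

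For the lower bound, $\Phi(0)=\Phi_{x}(0)=0$ gives $\ord_{x}\Phi(x,0)\ge2$. Also $\Phi(0)=G(0)=0$ gives $I_{0}(\Phi,G)\ge1$. Here $I_{0}(\Phi,G)$ is finite because it is bounded by the finite total $I_{0}(\Phi,\Psi)$. Summing yields $\mu_{0}\ge3$.

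\emph{Main obstacle.} The only non-computational point is justifying additivity $I_{0}(f,gh)=I_{0}(f,g)+I_{0}(f,h)$ for analytic germs when the total is finite. I would quote it as the standard property of local intersection multiplicity for plane curve germs (e.g.\ via the exact sequence $0\to\mathcal O/(f,h)\xrightarrow{\cdot g}\mathcal O/(f,gh)\to\mathcal O/(f,g)\to0$). Injectivity of multiplication by $g$ needs $f$ and $g$ to share no common factor. This holds here because $\Phi$ has nonzero linear part $\Phi_{y}(0)\,y$, so $\Phi$ is irreducible, and $\Phi\nmid y$ since $\Phi(x,0)\not\equiv0$.
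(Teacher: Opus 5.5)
Your proposal is correct and follows the paper's proof essentially step for step. You use the upper-triangular Jacobian for (i), the restriction $\dt DX(x,0)=\Phi_{x}(x,0)\,G(x,0)$ for (ii)--(iii), and additivity $I_{0}(\Phi,yG)=I_{0}(\Phi,y)+I_{0}(\Phi,G)$ with $I_{0}(\Phi,y)=\ord_{x}\Phi(x,0)$ for (iv). Your exact-sequence justification of that additivity, and your check that $\Phi(x,0)\not\equiv0$, are details the paper leaves implicit; the injectivity actually needs only $y\nmid\Phi$, since $y$ is prime, so the irreducibility of $\Phi$ is not needed.
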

	
	\begin{proof}
		(i) $\Psi=yG$ has $D\Psi(0)=(0,G(0))$, so
		$J=\begin{psmallmatrix}\Phi_{x}(0)&\Phi_{y}(0)\\0&G(0)\end{psmallmatrix}$
		is upper triangular; nilpotency forces $\Phi_{x}(0)=G(0)=0$, and
		$J\neq0$ forces $\Phi_{y}(0)\neq0$.  Then $J$ has first row
		$(0,\Phi_{y}(0))$ and zero second row, so $\ker J=\{y=0\}$.
		
		(ii) $\dt DX=\Phi_{x}\,\partial_{y}(yG)-\Phi_{y}\,\partial_{x}(yG)
		=\Phi_{x}(G+yG_{y})-\Phi_{y}\,yG_{x}$.  Setting $y=0$ leaves
		$\Phi_{x}(x,0)G(x,0)$.  Both factors vanish at $x=0$ by (i), so the
		product vanishes to order $\ge2$.
		
		(iii) $q_{0}=(1,0)^{T}$ is tangent to $\{y=0\}$, so
		$D_{q_{0}}(\dt DX)(0)=\frac{d}{dx}\bigl[\dt DX(x,0)\bigr]_{x=0}=0$ by
		(ii); hence $a=0$.  For $b$: $\tr DX=\Phi_{x}+G+yG_{y}$, whose
		restriction to $y=0$ is $\Phi_{x}(x,0)+G(x,0)$, and differentiating at
		$x=0$ gives $\Phi_{xx}(0)+G_{x}(0)$.
		
		(iv) $\mu_{0}(X)=I_{0}(\Phi,\Psi)=I_{0}(\Phi,y\,G)
		=I_{0}(\Phi,y)+I_{0}(\Phi,G)$ by additivity of the intersection
		number in the second argument.  Now $I_{0}(\Phi,y)=\ord_{x}\Phi(x,0)$,
		which is $\ge2$ because $\Phi(0)=\Phi_{x}(0)=0$; and
		$I_{0}(\Phi,G)\ge1$ because both vanish at the origin.  Isolation of
		the equilibrium guarantees finiteness.
	\end{proof}
	
	\begin{corollary}\label{cor:p0}
		For the class \eqref{eq:class} with $\psi(0)=0$, $\psi'(0)\neq0$ and
		$m\neq1$, the origin is automatically an isolated nilpotent
		equilibrium of rank one, with
		\[
		a=0,\qquad b=-m,\qquad \mu_{0}=3 .
		\]
	\end{corollary}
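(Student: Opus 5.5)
The plan is to specialize Theorem~\ref{thm:A} to the class \eqref{eq:class}, with $\Phi(x,y)=\tfrac{1-m}{2}x^{2}+\psi(y)$ and $\Psi=-xy=y\,G$, where $G(x,y)=-x$. Two things must be checked first. The origin is an equilibrium because $\Phi(0,0)=\psi(0)=0$ and $\Psi(0,0)=0$. The Jacobian there is
\[
J=\begin{pmatrix}(1-m)x & \psi'(y)\\ -y & -x\end{pmatrix}\Big|_{0}
=\begin{pmatrix}0&\psi'(0)\\0&0\end{pmatrix},
\]
which is nilpotent of rank one precisely because $\psi'(0)\neq0$. Its kernel is spanned by $q_{0}=(1,0)^{T}$, consistent with Theorem~\ref{thm:A}(i).

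Next I will verify that the origin is isolated, which is the remaining hypothesis of Theorem~\ref{thm:A}. Equilibria satisfy $xy=0$. On $\{x=0\}$ we need $\psi(y)=0$, and since $\psi'(0)\neq0$ the only solution near the origin is $y=0$. On $\{y=0\}$ we need $\tfrac{1-m}{2}x^{2}=0$, which forces $x=0$ because $m\neq1$. So the origin is isolated, and this is exactly where $m\neq1$ enters.

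Theorem~\ref{thm:A}(iii) then gives $a=0$ at once. For $b$ I use $b=\Phi_{xx}(0)+G_{x}(0)=(1-m)+(-1)=-m$. As a check, $\tr DX=-mx$ from the divergence computation in Proposition~\ref{prop:integrable}, and $D_{q_{0}}$ of this is $-m$.

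The only step with real content is the multiplicity, but it is also easy. By Theorem~\ref{thm:A}(iv), $\mu_{0}=\ord_{x}\Phi(x,0)+I_{0}(\Phi,G)$. Since $\Phi(x,0)=\tfrac{1-m}{2}x^{2}$ and $m\neq1$, the first term is $\ord_{x}\Phi(x,0)=2$. For the second term, $G=-x$, so
\[
I_{0}(\Phi,-x)=\ord_{y}\Phi(0,y)=\ord_{y}\psi(y)=1,
\]
using $\psi(0)=0$ and $\psi'(0)\neq0$. Hence $\mu_{0}=2+1=3$. I expect no genuine obstacle here. The point needing care is that $m\neq1$ is used twice, for isolation and for the order-two intercept, while $\psi'(0)\neq0$ is used for both the rank condition and $I_{0}(\Phi,G)=1$.
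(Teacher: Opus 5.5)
Your proposal is correct and matches the paper's proof: specialize Theorem~\ref{thm:A} with $G=-x$, read off $a=0$ and $b=\Phi_{xx}(0)+G_{x}(0)=-m$, and compute $\mu_{0}=\ord_{x}\Phi(x,0)+I_{0}(\Phi,x)=2+1=3$. The one addition is your explicit check that the origin is an isolated equilibrium (using $m\neq1$ on $\{y=0\}$ and $\psi'(0)\neq0$ on $\{x=0\}$), a hypothesis of Theorem~\ref{thm:A} that the paper asserts without checking when it writes "Theorem~\ref{thm:A} applies".
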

	
	\begin{proof}
		Here $\Phi=\tfrac{1-m}{2}x^{2}+\psi(y)$ and $G(x,y)=-x$.  Then
		$\Phi_{x}(0)=0$ and $\Phi_{y}(0)=\psi'(0)\neq0$, so
		Theorem~\ref{thm:A} applies and $a=0$.  Also
		$\Phi_{xx}(0)=1-m$ and $G_{x}=-1$, so $b=(1-m)-1=-m$.  Finally
		$\ord_{x}\Phi(x,0)=\ord_{x}\bigl(\tfrac{1-m}{2}x^{2}\bigr)=2$ for
		$m\neq1$, and $I_{0}(\Phi,G)=I_{0}(\Phi,x)=\ord_{y}\psi(y)=1$ since
		$\psi'(0)\neq0$; hence $\mu_{0}=3$.
	\end{proof}
	
	\begin{remark}[Why $m\neq1$]\label{rem:mne1}
		The excluded value is not a Bogdanov--Takens resonance but a loss of
		isolation.  By Theorem~\ref{thm:A}(iii) the intercept jumps exactly
		when $\Phi_{xx}(0)=0$, i.e.\ when $b=G_{x}(0)$; within the class
		\eqref{eq:class} this forces $m=1$, hence $\Phi=\psi(y)$ and
		$\Phi(x,0)=\psi(0)=0$ identically.  Then $\ell_{x}=\infty$ and the
		whole axis $\{y=0\}$ consists of equilibria
		($\dot x=\psi(0)=0$, $\dot y=0$), so $\mu_{0}=\infty$ and the
		hypotheses of Theorem~\ref{thm:A} fail.  This is why $m\neq1$ is
		imposed here and $m>1$ in Section~\ref{sec:type}.
	\end{remark}
	
	For the dilatonic model, $m=4$ and $\psi(u)=\tfrac12(\Lambda-u^{2})+Qu$
	with $\psi(0)=\Lambda/2$; the vacuum condition $\Lambda=0$ is exactly
	$\psi(0)=0$, and $\psi'(0)=Q\neq0$.  We recover $a=0$, $b=-4$, $\mu=3$
	with no computation of the normal form.
	
	\begin{remark}[Why the Kolmogorov obstruction has no corner analogue]
		\label{rem:kolmogorov}
		For a genuine Kolmogorov system $\dot x=xP$, $\dot y=yQ$, the Jacobian
		at the origin is $\operatorname{diag}(P(0),Q(0))$, which is
		\emph{diagonal}; it can be nilpotent only if it is zero.  Hence a
		Kolmogorov system never has a rank-one nilpotent point at the corner,
		and the restriction to interior equilibria in \cite{Companion} is
		intrinsic rather than technical.  The dilatonic family breaks the
		Kolmogorov structure minimally (one invariant axis instead of two),
		and this is precisely enough to place a nilpotent point at the corner:
		the entry $\Phi_{y}(0)=\psi'(0)$ that makes $J$ nilpotent of rank one
		is the entry that a Kolmogorov system cannot have.
	\end{remark}
	
	\section{The dichotomy}\label{sec:dichotomy}
	
	\begin{theorem}\label{thm:C}
		Let $X$ be an analytic planar vector field on a neighbourhood of the
		origin satisfying
		\begin{enumerate}[label=\textup{(H\arabic*)},leftmargin=3em]
			\item the axis $\{y=0\}$ is invariant for $X$;
			\item $X=R\,X_{I}$ for some first integral $I$ and some
			$R$ with $\{R=0\}\subseteq\{y=0\}$.
		\end{enumerate}
		Let $p$ be an isolated equilibrium with $DX(p)$ nilpotent and nonzero.
		Then:
		\begin{enumerate}[label=\textup{(\roman*)},leftmargin=2.2em]
			\item if $p\notin\{y=0\}$, then $b=0$;
			\item if $p\in\{y=0\}$, then $a=0$ and $\mu_{p}(X)\ge3$.
		\end{enumerate}
		In either case $ab=0$; consequently the two-parameter unfolding
		\eqref{eq:BT} is not versal at $p$ within the class.
	\end{theorem}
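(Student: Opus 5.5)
The plan is to reduce Theorem~\ref{thm:C} directly to Theorems~\ref{thm:B} and~\ref{thm:A}, splitting according to whether $p$ lies on the axis $\{y=0\}$. Hypothesis (H2) supplies the form $X=R\,X_{I}$ needed for the first case, and (H1) supplies the form $\Psi=y\,G$ needed for the second. The standing assumption that $DX(p)$ is nilpotent and nonzero matches the hypotheses of both earlier theorems, and isolation of $p$ is exactly the extra hypothesis Theorem~\ref{thm:A} requires.

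\textbf{Case (i).} Suppose $p\notin\{y=0\}$. Since $\{R=0\}\subseteq\{y=0\}$, we get $R(p)\neq0$. Continuity then gives an open neighbourhood $U$ of $p$ on which $R$ is nonvanishing and $X=R\,X_{I}$ holds with $I,R$ analytic (or at least $C^{2}$, which is all the identity in Theorem~\ref{thm:B} needs). Theorem~\ref{thm:B} then gives $b=D_{q_{0}}(\tr DX)(p)=0$. One point needs checking here: (H2) should be read so that $I$ and $R$ are regular near $p$. At points off the divisor this is the intended meaning, and it is satisfied in the model class (Proposition~\ref{prop:integrable}), where $I$ is analytic on $\{y\neq0\}$. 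I will state this reading explicitly rather than leave it implicit.

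\textbf{Case (ii).} Suppose $p\in\{y=0\}$, say $p=(x_{0},0)$. Translate $x\mapsto x-x_{0}$, which preserves both the axis and its invariance. By (H1) we can write $\Psi=y\,G$ with $G$ analytic. Invariance of $\{y=0\}$ for an analytic field means exactly that $\Psi(x,0)\equiv0$, and Hadamard's lemma / analytic division by $y$ then produces $G$. Theorem~\ref{thm:A}(iii) gives $a=0$, and Theorem~\ref{thm:A}(iv) gives $\mu_{p}(X)\ge3$. The intrinsic formulas \eqref{eq:intrinsic} are invariant under translation, so nothing is lost by moving $p$ to the origin.

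\textbf{Conclusion.} In both cases one factor of $ab$ vanishes, so $ab=0$. Versality of \eqref{eq:BT} requires $ab\neq0$ \cite{Kuznetsov2004}, so no point of the class is a versal Bogdanov--Takens point; this is the sense of ``not versal within the class''. The main obstacle is not computational. It is the regularity and domain bookkeeping in case (i), namely ensuring that the $I$ and $R$ furnished by (H2) are $C^{2}$ on a neighbourhood of an off-axis $p$. Everything else is a direct citation of the two earlier theorems.
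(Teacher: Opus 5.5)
Your proposal is correct and follows the paper's proof: off the axis, (H2) gives $R(p)\neq0$ and Theorem~\ref{thm:B} yields $b=0$; on the axis, translating $p$ to the origin preserves $\{y=0\}$, and Theorem~\ref{thm:A} then yields $a=0$ and $\mu_{p}(X)\ge3$, using (H1) and the isolation of $p$. You also say that (H2) must be read with $I,R$ at least $C^{2}$ near an off-axis $p$; this is a sensible explicit statement of a regularity assumption the paper leaves implicit.
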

	
	\begin{proof}
		(i) is Theorem~\ref{thm:B}, applicable because $R(p)\neq0$ by (H2).
		(ii) is Theorem~\ref{thm:A}, applicable by (H1) after translating $p$
		to the origin, which preserves $\{y=0\}$ since $p$ lies on it.
	\end{proof}
	
	\begin{corollary}\label{cor:nogenericBT}
		Every nilpotent equilibrium of a vector field satisfying (H1) and (H2),
		in particular of the dilatonic family \eqref{eq:model} for all
		$(\Lambda,Q)$, satisfies $ab=0$, and therefore is not a
		nondegenerate Bogdanov--Takens point.
	\end{corollary}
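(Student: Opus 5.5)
The plan is to read the corollary off Theorem~\ref{thm:C}, but to apply its two halves through Theorems~\ref{thm:B} and~\ref{thm:A} directly rather than through the packaged statement. This avoids two hypotheses that are not needed for the conclusion $ab=0$: isolation of $p$, and analyticity of $I$ across the divisor.

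\textbf{Step 1 (general class).} Let $p$ be an equilibrium with $DX(p)$ nilpotent and nonzero. I split into two cases according to whether $p$ lies on $\{y=0\}$.
\begin{itemize}
\item If $p\notin\{y=0\}$, then (H2) gives $R(p)\neq0$, and Theorem~\ref{thm:B} yields $b=0$. That proof never uses isolation of $p$. It also needs $I,R$ only on a neighbourhood of $p$, which may be taken inside $\{y\neq0\}$.
\item If $p\in\{y=0\}$, translate $p$ to the origin; this preserves the axis. Then (H1) lets me invoke parts (i)--(iii) of Theorem~\ref{thm:A}, giving $a=0$. Isolation enters only in part (iv), the multiplicity bound, which we do not need here.
\end{itemize}
In both cases $ab=0$. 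By the versality criterion recalled after \eqref{eq:BT}, $p$ is therefore not a nondegenerate Bogdanov--Takens point.

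\textbf{Step 2 (dilatonic family).} I verify the hypotheses for \eqref{eq:model} with $x=H$, $y=u$.
\begin{itemize}
\item (H1) holds because $\dot u=-Hu$.
\item For (H2) I use Proposition~\ref{prop:integrable} and \eqref{eq:Iexplicit}: $X=\tfrac12u^{4}X_{I}$ with $R=\tfrac12u^{4}$, so $\{R=0\}=\{u=0\}$.
\end{itemize}
The one subtlety is that $I$ is singular on $\{u=0\}$, so (H2) holds only on $\{u\neq0\}$. By Step 1 this suffices: the off-divisor case uses $I$ only near $p$, and the on-divisor case uses only (H1).

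\textbf{Step 3 (cross-check).} For completeness I would confirm that the nilpotent equilibria for all $(\Lambda,Q)$ are exactly the listed ones. Equilibria satisfy $Hu=0$.
\begin{itemize}
\item On $u=0$ the only candidate is the origin with $\Lambda=0$. There $J_{0}$ has off-diagonal entry $Q$, so it is nonzero nilpotent iff $Q\neq0$. This matches Corollary~\ref{cor:p0}.
\item On $H=0$, $u\neq0$, nilpotency requires $\dt DX=u(Q-u)=0$, so $u=Q\neq0$ and $\Lambda=-Q^{2}$. This is $p_{Q}$, matching Corollary~\ref{cor:pQ}.
\item When $Q=0$ the Jacobian at the origin vanishes and $\dt DX=-u^{2}\neq0$ elsewhere, so there is nothing to check.
\end{itemize}
Table \eqref{eq:table} is consistent with all of this.

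\textbf{Main obstacle.} There is no real computational obstacle. The point needing care is the mismatch noted in Step 2: Theorem~\ref{thm:C} as stated presumes $I$ defined on a whole neighbourhood, and an isolated $p$. I would handle both by citing the component theorems directly, and by noting that $I$ and isolation are used only locally off the divisor and only for $\mu$, respectively.
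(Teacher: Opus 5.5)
Your proposal is correct and follows the paper's route. The corollary is read off Theorem~\ref{thm:C}, whose proof is Theorem~\ref{thm:B} off the divisor (using $R(p)\neq0$ from (H2)) and Theorem~\ref{thm:A} on it (using (H1)), with (H1) and (H2) checked for \eqref{eq:model} via $\dot u=-Hu$ and $R=\tfrac12u^{4}$. Citing the two component theorems directly, so that you need neither isolation of $p$ nor analyticity of $I$ across $\{u=0\}$, is a sound tightening of points the paper's packaged statement passes over, not a different argument.
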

	
	\begin{remark}[What is and is not asserted]\label{rem:codim}
		Three cautions.  First, $ab=0$ is a statement about the vanishing of a
		product of two explicit derivatives; we do not translate it into a
		codimension, because the two points of \eqref{eq:model} are degenerate
		to different orders: $p_{0}$ satisfies the nondegeneracy conditions
		of the codimension-three elliptic stratum of \cite{DRS1991}
		(Theorem~\ref{thm:elliptic}), whereas $p_{Q}$ does not belong to any
		finite-codimension stratum of that classification
		(Remark~\ref{rem:pQdegenerate}).  Second, hypothesis (H2) is a genuine
		restriction and is \emph{not} implied by the mere existence of a first
		integral: at an equilibrium of $X=R\,X_{I}$ one has $R=0$ or
		$\nabla I=0$, so (H2) amounts to requiring that every equilibrium off
		$\{y=0\}$ be a critical point of $I$.  It must be checked model by
		model.  Third, the class \eqref{eq:class} is exactly the set of
		systems with $\dot y=-xy$ admitting a \emph{monomial} inverse
		integrating factor (Proposition~\ref{prop:integrable}); it is not all
		of two-dimensional dilaton gravity.
	\end{remark}
	
	\begin{remark}[Attribution]\label{rem:attribution}
		Neither Theorem~\ref{thm:B} nor Theorem~\ref{thm:A} is deep, and we do
		not claim either as a new result.  Theorem~\ref{thm:B} may be read as
		the conjunction of two elementary facts: a Hamiltonian planar field is
		divergence free, so $\tr DX_{I}\equiv0$ and $b$ vanishes trivially for
		it; and the vanishing of $b$ survives multiplication by a nonvanishing
		factor.  The proof given above is self-contained and does not appeal to
		orbital invariance, but the content is that observation.
		Theorem~\ref{thm:A} is the factorization $\dt DX|_{y=0}=\Phi_{x}G$
		together with the fact that both factors vanish at the corner.  The
		literature on inverse integrating factors and nilpotent singularities
		is extensive (see the survey \cite{GGGL2009} and, for the nilpotent
		case specifically, \cite{AlgabaGarciaGine2016}), and we have not
		found either statement recorded in the form used here; but both lie
		within easy reach of that literature, and a reader who locates a prior
		occurrence should regard the present contribution as the
		\emph{combination} of the two into the dichotomy of
		Theorem~\ref{thm:C}, and the consequences drawn from it in
		Sections~\ref{sec:newton} and~\ref{sec:type}.
	\end{remark}
	
	This is the structural statement the paper was looking for.  The two
	degeneracies in \eqref{eq:table} are not two accidents but the two
	strata of a single object: the divisor $\{u=0\}$, which is
	simultaneously the invariant axis of the flow and the zero locus of the
	inverse integrating factor.  Off it, integrability kills $b$; on it, the
	invariant axis kills $a$.  The ``boundary'' whose absence the toroidal
	argument was signalling is $\{R=0\}$.
	
	\section{Boundary multiplicity, and a Newton reading of it}\label{sec:newton}
	
	This section has two halves and the distinction between them is
	deliberate.  The first proves, by an elementary local identity, the exact
	multiplicities of the two nilpotent points; nothing there is conjectural
	and nothing there uses convexity.  The second observes that the two
	numbers coincide with mixed covolumes of Newton diagrams and explains why
	that is the natural combinatorial reading, while stopping short of a
	general theorem, for the reasons set out in
	Remark~\ref{rem:covolcaveat}.
	
	\subsection{An exact boundary multiplicity}\label{sec:exactmult}
	
	For $f\in\C\{x,y\}$ with $f(0)=0$ write
	\[
	\ell_{x}(f)=\ord_{x}f(x,0),
	\qquad
	\ell_{y}(f)=\ord_{y}f(0,y)
	\]
	for the intercepts of the Newton diagram of $f$ with the coordinate axes,
	with the convention $\ell=\infty$ when the restriction vanishes
	identically.
	
	\begin{proposition}\label{prop:monomial}
		Let $f\in\C\{x,y\}$ with $f(0)=0$ and let
		$g=x^{m_{1}}y^{m_{2}}\,U$ with $U(0)\neq0$.  Write
		$\ell_{x}=\ord_{x}f(x,0)$ and $\ell_{y}=\ord_{y}f(0,y)$ for the
		intercepts of the Newton diagram of $f$.  Then
		\begin{equation}\label{eq:monomialformula}
			I_{0}(f,g)=m_{1}\,\ell_{y}+m_{2}\,\ell_{x},
		\end{equation}
		No nondegeneracy hypothesis is required.
	\end{proposition}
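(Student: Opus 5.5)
The plan is to reduce everything to the standard axioms of the local intersection number $I_{0}$ on $\C\{x,y\}$: invariance under multiplication by units, additivity in each argument, symmetry, and the normalization $I_{0}(f,x)=\ord_{y}f(0,y)$, $I_{0}(f,y)=\ord_{x}f(x,0)$. The last pair is the same computation already used in the proof of Theorem~\ref{thm:A}(iv), where $I_{0}(\Phi,y)=\ord_{x}\Phi(x,0)$. It follows from $\C\{x,y\}/(f,y)\cong\C\{x\}/(f(x,0))$, whose dimension is $\ord_{x}f(x,0)$, with the value $\infty$ exactly when $f(x,0)\equiv0$.

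The key steps, in order, are these. First, since $U(0)\neq0$, $U$ is a unit in $\C\{x,y\}$, so the ideals $(f,g)$ and $(f,x^{m_{1}}y^{m_{2}})$ coincide and $I_{0}(f,g)=I_{0}(f,x^{m_{1}}y^{m_{2}})$. Second, I apply additivity in the second argument, $I_{0}(f,hk)=I_{0}(f,h)+I_{0}(f,k)$, repeatedly to obtain $I_{0}(f,x^{m_{1}}y^{m_{2}})=m_{1}I_{0}(f,x)+m_{2}I_{0}(f,y)$. Third, I substitute $I_{0}(f,x)=\ell_{y}$ and $I_{0}(f,y)=\ell_{x}$. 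This gives \eqref{eq:monomialformula}. No Newton-polygon nondegeneracy enters at any point, because only the two coordinate restrictions of $f$ are ever used.

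The only point needing care is additivity when some term is infinite, that is, when $f$ shares a component with $x$ or $y$. In that case the formula should be read in $\mathbb{N}\cup\{\infty\}$, with the convention $0\cdot\infty=0$ when $m_{i}=0$. Additivity still holds there: if $x\mid f$, then $x^{m_{1}}$ and $f$ share a component, so both sides are $\infty$ (for $m_{1}\geq1$). If one wants to avoid citing additivity as a black box, I would prove it directly for the finite case via the exact sequence
\[
0\to\C\{x,y\}/(f,k)\xrightarrow{\;\cdot h\;}\C\{x,y\}/(f,hk)\to\C\{x,y\}/(f,h)\to0 .
\]
Injectivity of multiplication by $h$ uses that $h$ is a non-zero-divisor modulo $f$. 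For $h=x$ or $y$ this holds precisely when $x\nmid f$ (resp.\ $y\nmid f$), which is exactly the finiteness condition. I expect this injectivity check to be the main obstacle, mild as it is; everything else is bookkeeping.
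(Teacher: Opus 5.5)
Your proposal is correct and follows the paper's own proof: use unit invariance to replace $g$ by $x^{m_{1}}y^{m_{2}}$, apply additivity of $I_{0}$ in the second argument, and identify $I_{0}(f,x)=\ell_{y}$ and $I_{0}(f,y)=\ell_{x}$. Your extra bookkeeping for the infinite case and your exact-sequence proof of additivity (injectivity of multiplication by $h$ holds because $x$ or $y$ is prime and does not divide $f$) fill in details the paper leaves implicit; the paper only notes, after the proof, that the right-hand side is finite exactly when neither $x$ nor $y$ divides $f$.
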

	
	\begin{proof}
		Units do not change intersection numbers, and $I_{0}$ is additive in
		each argument, so
		$I_{0}(f,g)=m_{1}I_{0}(f,x)+m_{2}I_{0}(f,y)$.  Now $I_{0}(f,x)$ is the
		order of vanishing of $f(0,y)$, i.e. $\ell_{y}$, and symmetrically
		$I_{0}(f,y)=\ell_{x}$.  
	\end{proof}

	The hypothesis $U(0)\neq0$ is what makes this a boundary statement: it
	says that $g$ vanishes on the coordinate divisors and nowhere else near
	the origin, which is precisely the configuration a toroidal count cannot
	see.  The right-hand side is finite if and only if $f$ is divisible by
	neither $x$ nor $y$, that is, if and only if the origin is an isolated
	zero of the pair.
	
	\subsection{The two points}
	
	\noindent\textbf{The vacuum $p_{0}$, $\Lambda=0$.}  Here
	$f=F_{1}=-\tfrac32H^{2}-\tfrac12u^{2}+Qu$ and $g=F_{2}=-Hu$.  The
	support of $f$ is $\{(2,0),(0,2),(0,1)\}$, so its Newton diagram is the
	single segment from $(0,1)$ to $(2,0)$ and
	\[
	\ell_{x}=\ord_{H}F_{1}(H,0)=2,
	\qquad
	\ell_{y}=\ord_{u}F_{1}(0,u)=1 .
	\]
	Here $g$ is the monomial $H^{1}u^{1}$, so $m=(1,1)$ and
	Proposition~\ref{prop:monomial} gives
	\begin{equation}\label{eq:mu0}
		\mu_{p_{0}}=1\cdot\ell_{y}+1\cdot\ell_{x}=1+2=3 .
	\end{equation}
	Note where the two contributions come from: $\ell_{y}=1$ because $F_{1}$
	contains the \emph{linear} term $Qu$, which is the nonzero off-diagonal
	entry of the nilpotent Jacobian; $\ell_{x}=2$ because $F_{1}$ contains
	\emph{no} linear term in $H$, which is exactly the nilpotency condition
	$\tr DX(p_{0})=0$.  Thus the conjecture that ``absence of a linear term
	in $H$ implies $a=0$'' is now a theorem, in the following precise sense:
	$\ell_{x}\ge2$ is equivalent to $\Phi_{x}(0)=0$, and by
	Proposition~\ref{prop:monomial} it forces $\mu\ge3$, hence $a=0$ by the
	equivalence $a\neq0\iff\mu=2$ of \cite{Companion}.  This is a second
	route to Theorem~\ref{thm:A}(iii), independent of the factorization
	argument given there.
	
	\smallskip
	\noindent\textbf{The horizon point $p_{Q}$, $\Lambda=-Q^{2}$.}  In the
	local chart \eqref{eq:localpQ}, $f=-\tfrac32H^{2}-\tfrac12v^{2}$ and
	$g=-H(Q+v)$, so $g$ is the monomial $H^{1}v^{0}$ times a unit,
	$m=(1,0)$, and the intercepts of $f$ are $\ell_{x}=\ell_{y}=2$.  Hence
	\begin{equation}\label{eq:muQ}
		\mu_{p_{Q}}=1\cdot\ell_{y}+0\cdot\ell_{x}=2 .
	\end{equation}
	Both computations are two lines of order counting.  Neither used a
	Newton polytope, a mixed volume, or a nondegeneracy hypothesis, and the
	multiplicities in Table~\ref{tab:summary} should be regarded as
	established at this point, independently of everything in
	Section~\ref{sec:covolinterp}.
	
	\begin{table}[t]
		\centering
		\begin{tabular}{lccccc}
			\hline
			point & stratum & $a$ & $b$ & $\mu$ & controlling invariant\\
			\hline
			$p_{0}=(0,0)$, $\Lambda=0$ & $u=0$ (divisor) & $0$ & $-4$ & $3$
			& $\Covol=m_{1}\ell_{y}+m_{2}\ell_{x}=3$\\
			$p_{Q}=(0,Q)$, $\Lambda=-Q^{2}$ & $u\neq0$ & $Q/2$ & $0$ & $2$
			& $\Covol=2$\\
			\hline
		\end{tabular}
		\caption{The two rank-one nilpotent points of \eqref{eq:model}
			($Q\neq0$) and the local
			Newton invariant that computes their multiplicity.  The toroidal mixed
			volume of \cite{Bernstein1975} is unavailable at either point; the mixed
			covolume of Newton diagrams is exact at both.}
		\label{tab:summary}
	\end{table}

	\subsection{A mixed-covolume reading}\label{sec:covolinterp}
	
	For $f\in\C\{x,y\}$ with $f(0)=0$ let
	$\Gp(f)=\operatorname{conv}\bigl(\bigcup_{\alpha\in\operatorname{supp}f}
	(\alpha+\R^{2}_{\ge0})\bigr)$ be the Newton polyhedron, and let its
	\emph{Newton diagram} be the union of the compact faces of $\Gp(f)$.
	Call $f$ \emph{convenient} if the diagram meets both axes, and in that
	case set
	\begin{equation}
		\covol\Gp(f)=\operatorname{area}\bigl(\R^{2}_{\ge0}\setminus\Gp(f)\bigr)<\infty .
	\end{equation}
	The \emph{mixed covolume} of two convenient polyhedra is
	\begin{equation}\label{eq:covoldef}
		\Covol\bigl(\Gp(f),\Gp(g)\bigr)
		:=\covol\bigl(\Gp(f)+\Gp(g)\bigr)-\covol\Gp(f)-\covol\Gp(g),
	\end{equation}
	the Minkowski sum being taken on the $\Gp$'s.  Covolumes of scaled
	Minkowski sums are polynomial, so \eqref{eq:covoldef} is the
	polarization of $\covol$ in the sense of the theory of coconvex bodies
	\cite{KhovanskiiTimorin2014}; unlike ordinary mixed volumes, coconvex
	mixed volumes satisfy a \emph{reversed} Alexandrov--Fenchel inequality.
	This is the local counterpart of the mixed volume that appears in the
	Bernstein--Kushnirenko bound \cite{Bernstein1975,Kouchnirenko1976}, and
	it is the invariant that survives when the point of interest leaves the
	torus.
	
	In the plane the mixed covolume admits a closed formula, which we record
	because it makes the computations below mechanical.
	
	\begin{proposition}\label{prop:minformula}
		Let $f,g$ be convenient.  Orient the Newton diagram of $f$ from the
		$y$-axis to the $x$-axis and let its edge vectors, in that order, be
		$e_{k}=(p_{k},-q_{k})$ with $p_{k},q_{k}>0$, $k=1,\dots,r$; similarly
		$e_{i}'=(p_{i}',-q_{i}')$ for $g$, $i=1,\dots,s$.  Then
		\begin{equation}\label{eq:minformula}
			\Covol\bigl(\Gp(f),\Gp(g)\bigr)
			=\sum_{k=1}^{r}\sum_{i=1}^{s}\min\bigl(p_{k}q_{i}',\;p_{i}'q_{k}\bigr).
		\end{equation}
	\end{proposition}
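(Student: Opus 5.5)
The plan is to compute $\covol$ of a convenient Newton polyhedron directly from its edge sequence. The resulting expression is a quadratic form in the edge vectors. The diagram of $\Gp(f)+\Gp(g)$ is obtained by merging the two edge sequences by slope, and after this merge the mixed covolume \eqref{eq:covoldef} reduces to the cross terms of that form.

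\emph{Step 1 (covolume from edges).} Let the diagram of a convenient $\Gp$ have edges $e_{1},\dots,e_{r}$, $e_{k}=(p_{k},-q_{k})$, running from $(0,\ell_{y})$ to $(\ell_{x},0)$. The complement $\R^{2}_{\ge0}\setminus\Gp$ is the region under this polygonal path. Cut it by vertical lines through the vertices. The slab over edge $k$ is a trapezoid of width $p_{k}$, with heights $\sum_{j>k}q_{j}$ and $\sum_{j\ge k}q_{j}$. Hence
\[
\covol\Gp=\sum_{k}\tfrac12 p_{k}q_{k}+\sum_{k<j}p_{k}q_{j}.
\]
Convexity of $\Gp$ forces the ratios $q_{k}/p_{k}$ to be strictly decreasing along the path. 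Only this ordering enters the formula.

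\emph{Step 2 (Minkowski sum).} Next I would show that $\Gp(f)+\Gp(g)$ is again convenient. Its diagram should be the path whose edges are the multiset union $\{e_{k}\}\cup\{e_{i}'\}$, arranged in order of decreasing $q/p$, with parallel edges concatenated. This is the standard fact that faces of a Minkowski sum in a given direction are sums of faces in that direction. Equivalently, support functions add: $h_{\Gp(f)+\Gp(g)}=h_{\Gp(f)}+h_{\Gp(g)}$ on the open negative quadrant of covectors $(-\alpha,-\beta)$, $\alpha,\beta>0$. Each compact edge is the face selected by a covector normal to it. The two unbounded faces are the vertical and horizontal rays, and these add as well, giving intercepts $(0,\ell_{y}+\ell_{y}')$ and $(\ell_{x}+\ell_{x}',0)$.

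I expect this step to be the main obstacle. The point is to be careful that unboundedness of $\Gp$ does not spoil the face-by-face description. I would handle it by truncating: intersect each $\Gp$ with a large box $[0,N]^{2}$, apply the polygon statement to the resulting bounded polygons, and note that near the origin the compact part of the boundary is unaffected for large $N$.

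\emph{Step 3 (cancellation).} Apply Step 1 to the merged sequence. The diagonal terms $\tfrac12 p q$ and the same-family cross terms reproduce exactly $\covol\Gp(f)+\covol\Gp(g)$, so they cancel in \eqref{eq:covoldef}. What remains is one term for each pair $(k,i)$:
\begin{itemize}[label={}]
\item if $e_{k}$ precedes $e_{i}'$, i.e.\ $q_{k}/p_{k}>q_{i}'/p_{i}'$, or $q_{k}p_{i}'>p_{k}q_{i}'$, the pair contributes $p_{k}q_{i}'$;
\item otherwise the pair contributes $p_{i}'q_{k}$.
\end{itemize}
In both cases the contribution is $\min(p_{k}q_{i}',p_{i}'q_{k})$, which gives \eqref{eq:minformula}.

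It remains to check ties, where $e_{k}\parallel e_{i}'$ and the two edges merge into the single edge $e_{k}+e_{i}'$. The half-product of the merged edge expands as
\[
\tfrac12(p_{k}+p_{i}')(q_{k}+q_{i}')=\tfrac12p_{k}q_{k}+\tfrac12p_{i}'q_{i}'+\tfrac12(p_{k}q_{i}'+p_{i}'q_{k}),
\]
and since $p_{k}q_{i}'=p_{i}'q_{k}$ here, the last term equals the common value of the minimum. The merged edge therefore gives the same total as treating the two edges as consecutive in either order, so the formula is consistent in this case.
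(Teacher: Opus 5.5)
Your proposal is correct and follows the same route as the paper: the trapezoid formula $\covol\Gp=\tfrac12\sum_k p_kq_k+\sum_{k<j}p_kq_j$, the slope-sorted merge of edges for the Minkowski sum, and cancellation of the same-family terms, leaving $\min(p_kq_i',p_i'q_k)$ for each mixed pair. Your support-function justification of the merged diagram and your check of parallel edges fill in points the paper's proof passes over silently, but they do not change the argument.
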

	
	\begin{proof}
		Let $A=\sum_{k}q_{k}$ and $B=\sum_{k}p_{k}$ be the intercepts of the
		diagram of $f$.  Summing trapezoids under the diagram gives
		\[
		\covol\Gp(f)
		=\tfrac12\sum_{k}p_{k}q_{k}+\sum_{k<i}p_{k}q_{i},
		\]
		the indices being ordered by increasing slope of the edges.  The
		diagram of $\Gp(f)+\Gp(g)$ is obtained by concatenating all edges of
		both diagrams sorted by slope.  Applying the displayed formula to the
		merged list and subtracting the two individual covolumes cancels all
		pure $f$--$f$ and $g$--$g$ contributions (their relative order is
		preserved by the merge), leaving exactly the mixed pairs: the pair
		$(e_{k},e_{i}')$ contributes $p_{k}q_{i}'$ if $e_{k}$ is the steeper
		edge and $p_{i}'q_{k}$ otherwise.  Since $e_{k}$ steeper means
		$q_{k}/p_{k}>q_{i}'/p_{i}'$, i.e. $p_{i}'q_{k}>p_{k}q_{i}'$, both
		cases are the minimum of the two products.
	\end{proof}
	
	\begin{theorem}[local Bernstein--Kushnirenko; \cite{Kouchnirenko1976,KhovanskiiTimorin2014}]
		\label{thm:localBKK}
		For $f,g\in\C\{x,y\}$ convenient and without common factor through the
		origin,
		\[
		I_{0}(f,g)\;\ge\;\Covol\bigl(\Gp(f),\Gp(g)\bigr),
		\]
		with equality when the pair $(f,g)$ is nondegenerate with respect to
		the compact faces of $\Gp(f)$ and $\Gp(g)$.
	\end{theorem}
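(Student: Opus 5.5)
The plan is to prove Theorem~\ref{thm:localBKK} by Newton--Puiseux parametrization of the branches of $f$, and then to match the resulting sum with the closed formula of Proposition~\ref{prop:minformula}. Since $f$ and $g$ have no common factor through the origin, $I_{0}(f,g)$ is finite. Factor $f=u\prod_{j}f_{j}$ into irreducible analytic branches, with $u$ a unit. Choose a primitive Puiseux parametrization $\gamma_{j}(t)=(x_{j}(t),y_{j}(t))$ of each branch $f_{j}$. Additivity of $I_{0}$ in the first argument then gives
\[
I_{0}(f,g)=\sum_{j}\ord_{t}\,g\bigl(\gamma_{j}(t)\bigr).
\]
Convenience of $f$ guarantees that neither $x$ nor $y$ divides $f$. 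Hence no branch is a coordinate axis, and every branch has $\ord_{t}x_{j}=:\alpha_{j}>0$ and $\ord_{t}y_{j}=:\beta_{j}>0$.

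The first key step is the Newton--Puiseux correspondence between branches and edges. A branch with leading terms $x\sim c\,t^{\alpha}$, $y\sim d\,t^{\beta}$, with $cd\neq0$, can only occur when the weight $(\alpha,\beta)$ is orthogonal to an edge $e_{k}=(p_{k},-q_{k})$ of the diagram of $f$, that is, when $\alpha/\beta=p_{k}/q_{k}$. Moreover, the branches attached to $e_{k}$ have leading coefficients given by the roots of the edge polynomial $f_{e_{k}}$ on the torus. Counted with the multiplicity of those roots, they satisfy
\[
\sum_{j\to e_{k}}\alpha_{j}=p_{k},
\qquad
\sum_{j\to e_{k}}\beta_{j}=q_{k}.
\]
I would establish this through the toric chart adapted to the edge: substitute $x=c\,t^{\alpha}$, $y=t^{\beta}(d+\dots)$ and read off that the lowest-order part of $f$ is $f_{e_{k}}$. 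The lattice-length count then follows from the degree of $f_{e_{k}}$ in the appropriate quasi-homogeneous variable.

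The second step bounds each term. For a branch attached to $e_{k}$, $\ord_{t}g(\gamma_{j})\ge\min_{\beta'\in\operatorname{supp}g}\ip{(\alpha_{j},\beta_{j})}{\beta'}$. This is the support function $h_{g}(\alpha_{j},\beta_{j})$ of $\Gp(g)$. Equality holds unless the initial form of $g$ in that weight vanishes at $(c,d)$. Summing over the branches attached to $e_{k}$, and using homogeneity of $h_{g}$, gives a contribution of $h_{g}(p_{k},q_{k})$ from that edge. Convenience of $g$ lets me evaluate this minimum edge by edge along the diagram of $g$. The minimum over vertices of $\Gp(g)$ of $p_{k}y'+q_{k}x'$ telescopes into $\sum_{i}\min(p_{k}q_{i}',p_{i}'q_{k})$: walking from the $y$-intercept, each edge $e_{i}'$ lowers the pairing exactly when it is steeper than $e_{k}$ in the appropriate sense. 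Summing over $k$ reproduces the right-hand side of \eqref{eq:minformula}, which is $\Covol$ by Proposition~\ref{prop:minformula}. This gives the inequality $I_{0}(f,g)\ge\Covol$.

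For the equality case, nondegeneracy with respect to compact faces is used as follows. For each edge $e_{k}$ of $f$, the face of $\Gp(g)$ selected by the same weight has an initial form $g_{\sigma}$, and $f_{e_{k}}$ and $g_{\sigma}$ have no common zero in $(\C^{*})^{2}$. Then the leading coefficient $(c,d)$ of every branch attached to $e_{k}$ is a root of $f_{e_{k}}$ and not of $g_{\sigma}$, so every inequality in the second step is an equality.

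I expect the main obstacle to be the first step: the bookkeeping that the branches attached to $e_{k}$ carry total exponents exactly $(p_{k},q_{k})$, even when $f$ itself is degenerate on that edge and $f_{e_{k}}$ has repeated roots. Repeated roots force further Puiseux steps, but they do not change the leading exponents. My first attempt would be to handle this by induction on the Newton polygon under the quasi-homogeneous blow-up, rather than by explicit root-counting.
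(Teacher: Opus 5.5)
The paper gives no proof of Theorem~\ref{thm:localBKK}; it imports the statement from Kouchnirenko and from the coconvex framework of Khovanski\u\i--Timorin. Your Puiseux-branch argument is a different route: a self-contained two-dimensional proof. Its architecture is sound, but the bookkeeping in your first step is transposed.

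Orthogonality of $(\alpha,\beta)$ to $e_{k}=(p_{k},-q_{k})$ means $\alpha p_{k}=\beta q_{k}$, i.e.\ $\alpha/\beta=q_{k}/p_{k}$. The correct counts are therefore $\sum_{j\to e_{k}}\alpha_{j}=q_{k}$ and $\sum_{j\to e_{k}}\beta_{j}=p_{k}$. For instance, at the vacuum $F_{1}$ has the single branch $H=t$, $u=\tfrac{3}{2Q}t^{2}+\cdots$ on the edge $(2,-1)$, so $(\alpha,\beta)=(1,2)=(q_{1},p_{1})$. Consequently each edge contributes $h_{g}(q_{k},p_{k})=\min(q_{k}x'+p_{k}y')$, not $h_{g}(p_{k},q_{k})$.

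Your next sentence actually minimizes $p_{k}y'+q_{k}x'$, which is the right quantity. So the two transpositions cancel, and the telescoping to $\sum_{i}\min(p_{k}q_{i}',p_{i}'q_{k})$ is correct. Taken literally, however, $h_{g}(p_{k},q_{k})$ telescopes to $\sum_{i}\min(q_{k}q_{i}',p_{k}p_{i}')$. For $f=x^{2}+y^{3}$, $g=x^{3}+y^{2}$ this gives $6>4=I_{0}(f,g)$, which would contradict the inequality you are proving.

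The step you single out as the main obstacle needs no induction on blow-ups. Because $f(0,y)\not\equiv0$, Weierstrass preparation and the Newton--Puiseux theorem give $f=\varepsilon\prod_{r=1}^{\ell_{y}}\bigl(y-\phi_{r}(x)\bigr)$. Here $\varepsilon$ is a unit and $\phi_{r}\not\equiv0$, because $f(x,0)\not\equiv0$. Initial forms for positive weights multiply, so $\Gp(f)$ is the Minkowski sum of the $\Gp(y-\phi_{r})$, each of which has the single edge $(\ord_{x}\phi_{r},-1)$. Hence exactly $q_{k}$ roots have $\ord_{x}\phi_{r}=p_{k}/q_{k}$, whatever the multiplicities of the roots of $f_{e_{k}}$. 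A branch with $\ord_{t}x_{j}=\alpha_{j}$ accounts for $\alpha_{j}$ conjugate roots. This gives $\sum_{j\to e_{k}}\alpha_{j}=q_{k}$, and then $\sum_{j\to e_{k}}\beta_{j}=p_{k}$ follows from the ratio.

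As to what each route buys: the cited theorem holds in all dimensions and sits inside a general theory. As stated, though, it needs both germs convenient, which is exactly why the paper cannot apply it at its two nilpotent points (Remark~\ref{rem:covolcaveat}).

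Your argument uses convenience of $g$ only in the final rewriting via Proposition~\ref{prop:minformula}:
\begin{itemize}
\item the bound $I_{0}(f,g)\ge\sum_{k}h_{g}(q_{k},p_{k})$ holds for any $g$ without common factor with a convenient $f$;
\item the right-hand side equals the regularized $\Covol(\Gp(f),\Gp(g_{N}))$ for large $N$;
\item for $g=x^{m_{1}}y^{m_{2}}U$ every positive-weight initial form of $g$ is a monomial, so nondegeneracy is automatic and equality holds.
\end{itemize}
Since $F_{1}$ is convenient at both $p_{0}$ and $p_{Q}$, this upgrades the agreement of Propositions~\ref{prop:monomial} and~\ref{prop:covolreg} from a combinatorial coincidence to an instance of a local Bernstein--Kushnirenko theorem. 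It disposes of gaps (i) and (ii) of Remark~\ref{rem:covolcaveat} in the configuration the paper actually meets; gap (iii) is untouched. It also locates the defect $I_{0}-\Covol$ precisely: it comes only from branches whose leading coefficients are common torus zeros of $f_{e_{k}}$ and $g_{\sigma}$.
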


	When one germ is \emph{not} convenient (as happens whenever a
	component is divisible by a single monomial, which is the situation at
	both of our points), the two covolumes on the right of
	\eqref{eq:covoldef} are separately infinite while their difference need
	not be.  One may then define the mixed covolume by regularization,
	replacing $g$ by $g_{N}=g+x^{N}+y^{N}$.
	
	\begin{proposition}\label{prop:covolreg}
		Let $f$ have Newton diagram edge vectors $(p_{k},-q_{k})$ and let
		$g=x^{m_{1}}y^{m_{2}}U$ with $U(0)\neq0$.  Then for all sufficiently
		large $N$,
		\[
		\Covol\bigl(\Gp(f),\Gp(g_{N})\bigr)=m_{1}\ell_{y}+m_{2}\ell_{x},
		\]
		independently of $N$; that is, the regularized mixed covolume equals
		the exact multiplicity of Proposition~\ref{prop:monomial}.
	\end{proposition}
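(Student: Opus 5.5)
The plan is to compute the left-hand side directly from the closed planar formula of Proposition~\ref{prop:minformula}. We assume $f$ is convenient, i.e.\ $\ell_{x},\ell_{y}<\infty$; otherwise the right-hand side is infinite and $\Gp(f)$ has no covolume to mix. Write the edge vectors of the Newton diagram of $f$ as $e_{k}=(p_{k},-q_{k})$, $k=1,\dots,r$. Summing these edges from the $y$-intercept $(0,\ell_{y})$ to the $x$-intercept $(\ell_{x},0)$ gives
\[
\textstyle\sum_{k}p_{k}=\ell_{x},\qquad \sum_{k}q_{k}=\ell_{y}.
\]
It then remains to find the Newton diagram of $g_{N}$ and evaluate \eqref{eq:minformula}.

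\emph{Step 1: the diagram of $g_{N}$.} Since $U(0)\neq0$, the support of $x^{m_{1}}y^{m_{2}}U$ contains $(m_{1},m_{2})$ and lies in $(m_{1},m_{2})+\R^{2}_{\ge0}$. Take $N>m_{1}+m_{2}$, so that the two added monomials cannot cancel and are not absorbed. Then
\[
\Gp(g_{N})=\operatorname{conv}\bigl\{(0,N),(m_{1},m_{2}),(N,0)\bigr\}+\R^{2}_{\ge0}.
\]
Its diagram, oriented from the $y$-axis to the $x$-axis, has the edges
\[
e_{1}'=(m_{1},-(N-m_{2})),\qquad e_{2}'=(N-m_{1},-m_{2}).
\]
If $m_{1}=0$ or $m_{2}=0$, one of these edges degenerates and is simply omitted. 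Its would-be contribution in Step~2 is zero in any case, so the cases need not be separated. In particular $g_{N}$ is convenient, and Proposition~\ref{prop:minformula} applies.

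\emph{Step 2: evaluate the minima.} For each edge $e_{k}$ of $f$, the pairing with $e_{1}'$ contributes $\min\bigl(p_{k}(N-m_{2}),\,m_{1}q_{k}\bigr)$. The pairing with $e_{2}'$ contributes $\min\bigl(p_{k}m_{2},\,(N-m_{1})q_{k}\bigr)$. Now choose
\[
N\ \ge\ N_{0}:=\max_{k}\Bigl(m_{1}+m_{2}+1,\; m_{2}+m_{1}q_{k}/p_{k},\; m_{1}+m_{2}p_{k}/q_{k}\Bigr).
\]
This is a finite threshold because $f$ has finitely many edges, all with $p_{k},q_{k}>0$. For such $N$ the two minima equal $m_{1}q_{k}$ and $m_{2}p_{k}$ respectively. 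Summing over $k$ gives
\[
\Covol\bigl(\Gp(f),\Gp(g_{N})\bigr)=m_{1}\textstyle\sum_{k}q_{k}+m_{2}\sum_{k}p_{k}=m_{1}\ell_{y}+m_{2}\ell_{x}.
\]
This is independent of $N$, and by Proposition~\ref{prop:monomial} it equals $I_{0}(f,g)$.

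\emph{Main obstacle.} No step is deep. The only point needing care is Step~1, specifically the claim that $\Gp(g_{N})$ depends only on the vertex $(m_{1},m_{2})$ and the two added monomials, and not on the higher-order terms of $U$. This holds because the whole support of $x^{m_{1}}y^{m_{2}}U$ lies in $(m_{1},m_{2})+\R^{2}_{\ge0}$, and because for $N>m_{1}+m_{2}$ the point $(m_{1},m_{2})$ lies strictly below the segment from $(0,N)$ to $(N,0)$, so it is a genuine vertex. A second point is to state explicitly the largeness threshold $N_{0}$ on which the conclusion depends. This is where "sufficiently large" enters: for small $N$ the comparison of slopes between $e_{k}$ and $e_{1}',e_{2}'$ can flip, and the formula fails.
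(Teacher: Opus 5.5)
Your proof is correct and follows the paper's argument: identify the two edges $(m_1,-(N-m_2))$ and $(N-m_1,-m_2)$ of the diagram of $g_N$, apply the min formula with $N$ large enough that the minima become $m_1q_k$ and $m_2p_k$, and sum using $\sum_k p_k=\ell_x$, $\sum_k q_k=\ell_y$. Your explicit threshold $N_0$ and your treatment of the cases $m_1=0$ or $m_2=0$ make precise what the paper leaves implicit; the one harmless slip is that for $m_2=0$ the added $x^N$ \emph{can} cancel against a term of $g$, but then $(N,0)$ is redundant, since $(m_1,0)$ already lies on the axis, so the Newton polyhedron is unchanged.
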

	
	\begin{proof}
		The diagram of $g_{N}$ consists of the two edges
		$(m_{1},-(N-m_{2}))$ and $(N-m_{1},-m_{2})$.  Formula
		\eqref{eq:minformula} gives, for $N$ large enough that the first edge
		is steeper and the second shallower than every $e_{k}$,
		\[
		\sum_{k}\Bigl[\min\bigl(p_{k}(N-m_{2}),\,m_{1}q_{k}\bigr)
		+\min\bigl(p_{k}m_{2},\,(N-m_{1})q_{k}\bigr)\Bigr]
		=\sum_{k}\bigl(m_{1}q_{k}+m_{2}p_{k}\bigr)
		=m_{1}\ell_{y}+m_{2}\ell_{x}. \qedhere
		\]
	\end{proof}
	
	Explicitly: at $p_{Q}$, regularizing $g$ to $H+v^{N}$ gives
	$\covol\Gp(f)=2$, $\covol\Gp(g_{N})=N/2$,
	$\covol(\Gp(f)+\Gp(g_{N}))=(N+8)/2$ and mixed covolume
	$(N+8)/2-2-N/2=2$; at $p_{0}$, regularizing $g=Hu$ to $Hu+H^{N}+u^{N}$
	gives $\covol\Gp(f)=1$, $\covol\Gp(g_{N})=N$,
	$\covol(\Gp(f)+\Gp(g_{N}))=N+4$ and mixed covolume $3$.
	
	\begin{remark}[What is claimed, and what is not]\label{rem:covolcaveat}
		Proposition~\ref{prop:covolreg} is a combinatorial identity; its
		agreement with Proposition~\ref{prop:monomial} is a fact relating
		those two statements, not a theorem about germs.  Three gaps separate
		it from a boundary Bernstein--Kushnirenko theorem, and we close none
		of them here.  (i)~Theorem~\ref{thm:localBKK}, which would supply the
		bridge, assumes both germs convenient; at $p_{0}$ the germ $g=Hu$ is
		not, so that theorem does not literally apply to the case of interest.
		(ii)~We have not shown $I_{0}(f,g_{N})=I_{0}(f,g)$: the regularization
		extends the combinatorics, it does not deform the germ.  (iii)~The
		coconvex mixed volumes of \cite{KhovanskiiTimorin2014} are constructed
		for \emph{bounded} coconvex bodies, whereas
		$\R^{2}_{\ge0}\setminus\Gp(Hu)$ is unbounded, so the identification
		with that theory is by analogy.  What can safely be said is that the
		exact boundary multiplicity admits a natural mixed-covolume reading;
		establishing a general Bernstein--Kushnirenko multiplicity theorem for
		non-convenient Newton diagrams is left open.
	\end{remark}
	
	\begin{remark}[What the torus loses]
		The Bernstein--Kushnirenko count is a statement about
		$(\C^{*})^{2}$; passing to a toric compactification distributes the
		total intersection among the boundary divisors, and an equilibrium
		sitting on such a divisor receives none of it.  What
		Proposition~\ref{prop:monomial} shows is that the lost information is
		entirely encoded in the \emph{intercepts} of the Newton diagram, i.e.
		in the position of the diagram relative to the coordinate axes, which
		is exactly the datum a mixed volume discards (mixed volumes are
		translation invariant; covolumes are not).  That is a reason to expect
		the mixed covolume, the translation-\emph{sensitive} polarization of
		the same construction, to be the right boundary object.  It is a
		reason, not yet a proof.
	\end{remark}
	
	\section{Topological type: the vacuum is elliptic}\label{sec:type}
	
	Corollary~\ref{cor:p0} gives $\mu=3$ at the vacuum, so the Takens
	reduction has $f(x)=c_{3}x^{3}+O(x^{4})$ with $c_{3}\neq0$ and the point
	is of saddle, focus or elliptic type \cite{DRS1991,DLA2006}.  We
	determine which.
	
	\begin{theorem}\label{thm:elliptic}
		Consider \eqref{eq:class} with $\psi(0)=0$, $\psi'(0)\neq0$ and
		$m>1$.  Then $(x,y)\mapsto(\xi,\eta):=\bigl(x,\Phi(x,y)\bigr)$ is a
		local diffeomorphism at the origin, its Jacobian determinant being
		$\Phi_{y}(0,0)=\psi'(0)\neq0$, and in these coordinates the germ
		takes the form
		\begin{equation}\label{eq:3jet}
			\dot\xi=\eta,
			\qquad
			\dot\eta=\underbrace{\tfrac{1-m}{2}}_{a_{3}}\xi^{3}
			\;\underbrace{-\,m}_{b_{1}}\,\xi\eta
			\;+\;\text{terms of quasi-homogeneous weight}\ \ge5,
		\end{equation}
		with weights $\operatorname{wt}(\xi)=1$,
		$\operatorname{wt}(\eta)=2$.  The
		sign of $b_{1}^{2}+8a_{3}$ is invariant under the scalings preserving
		\eqref{eq:3jet}, and
		\begin{equation}\label{eq:discriminant}
			b_{1}^{2}+8a_{3}=m^{2}+4(1-m)=(m-2)^{2}\;\ge\;0 .
		\end{equation}
		Consequently, since $a_{3}<0$ for $m>1$, the local phase portrait at
		the origin consists of one hyperbolic and one elliptic sector, for
		\emph{every} $m>1$ without exception.  The conclusion does not depend
		on $\psi$ beyond $\psi'(0)\neq0$.
	\end{theorem}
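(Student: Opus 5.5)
The plan is to verify the change of coordinates, compute the transformed field exactly enough to read off its quasi-homogeneous $3$-jet, and then feed the exact functions $F$ and $G$ of the Takens-type reduction into \cite[Thm.~3.5]{DLA2006}. The last step is where the content lies, and the case $m=2$ is the delicate point there.

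\textbf{Coordinates.} The map $(x,y)\mapsto(\xi,\eta)=(x,\Phi(x,y))$ has Jacobian matrix $\begin{psmallmatrix}1&0\\ \Phi_{x}&\Phi_{y}\end{psmallmatrix}$. Its determinant at the origin is $\Phi_{y}(0,0)=\psi'(0)=:c\neq0$, so the inverse function theorem gives a local analytic diffeomorphism. The inverse is explicit: $\psi$ is invertible near $0$, and
\[
y=\psi^{-1}\bigl(\eta-\tfrac{1-m}{2}\xi^{2}\bigr)=\tfrac1c\bigl(\eta-\tfrac{1-m}{2}\xi^{2}\bigr)+O\bigl((\eta-\tfrac{1-m}{2}\xi^{2})^{2}\bigr).
\]
In particular $y$ has quasi-homogeneous weight $\ge2$ in $(\xi,\eta)$.

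\textbf{Transformed field.} We have $\dot\xi=\dot x=\Phi=\eta$ exactly. For the second component,
\[
\dot\eta=\Phi_{x}\dot x+\Phi_{y}\dot y=(1-m)\xi\eta-\xi\,\psi'(y)\,y .
\]
Write $\psi'(y)y=\psi(y)+O(y^{2})=\eta-\tfrac{1-m}{2}\xi^{2}+O(y^{2})$, and note that $\xi\,O(y^{2})$ has weight $\ge5$. This gives
\[
\dot\eta=\tfrac{1-m}{2}\xi^{3}-m\,\xi\eta+(\text{weight}\ge5),
\]
which is \eqref{eq:3jet}. It is consistent with Corollary~\ref{cor:p0}, since $b=-m$ appears as the coefficient of $\xi\eta$. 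The discriminant is then arithmetic:
\[
b_{1}^{2}+8a_{3}=m^{2}+4-4m=(m-2)^{2}.
\]

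\textbf{Scaling invariance.} Consider $\xi\mapsto\alpha\xi$ and $t\mapsto t/s$ with $\alpha,s\neq0$. To keep $\dot\xi=\eta$ we must rescale $\eta\mapsto\alpha s\,\eta$. One checks that then $a_{3}\mapsto\alpha^{2}a_{3}/s^{2}$ and $b_{1}\mapsto\alpha b_{1}/s$. Hence $b_{1}^{2}+8a_{3}$ is multiplied by $\alpha^{2}/s^{2}>0$, so its sign is invariant, and the sign of $a_{3}$ is invariant as well.

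\textbf{Applying \cite[Thm.~3.5]{DLA2006}.} Put the germ in the form $\dot\xi=\eta$, $\dot\eta=B(\xi,\eta)$, so that $A\equiv0$ and the curve $\eta+A=0$ is $\eta=0$. Then $F(\xi)=B(\xi,0)$ and $G(\xi)=\partial_{\eta}B(\xi,0)$ can be computed exactly from the formula above, giving
\[
F(\xi)=\tfrac{1-m}{2}\xi^{3}+O(\xi^{4}),\qquad G(\xi)=-m\,\xi+O(\xi^{2}).
\]
In the notation of that theorem, $F=a\xi^{2k+1}+\cdots$ with $k=1$ and $a=a_{3}<0$ for $m>1$, and $G=b\xi^{n}+\cdots$ with $n=1=k$ and $b=-m\neq0$. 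This places us in the branch with $a<0$ and $n=k$, where the dichotomy is governed by $b^{2}+4(k+1)a=b_{1}^{2}+8a_{3}$. When this quantity is $\ge0$ the conclusion is one hyperbolic and one elliptic sector. Since $(m-2)^{2}\ge0$ always, this holds for every $m>1$. The data $a_{3}$, $b_{1}$ and the sign conditions involve only $m$, so the conclusion is independent of $\psi$ beyond $\psi'(0)\neq0$.

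\textbf{Main obstacle.} The hardest step is matching hypotheses precisely with the DLA statement. First, I must check that the weight-$\ge5$ remainder does not alter $\ord F=3$ or $\ord G=1$; it does not, because only the leading coefficients enter. Second, and more seriously, I must confirm that the equality case $b^{2}+8a=0$, i.e.\ $m=2$, is covered by the same conclusion in \cite[Thm.~3.5]{DLA2006}. If that theorem's case split turns out to be stated with a strict inequality, I would treat $m=2$ separately. The quasi-homogeneous part at $m=2$ is
\[
\dot\xi=\eta,\qquad \dot\eta=-\tfrac12\xi^{3}-2\xi\eta,
\]
and I would use its invariant curves $\eta=\lambda\xi^{2}$, where $\lambda$ solves $2\lambda^{2}+2\lambda+\tfrac12=0$. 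This equation has the double root $\lambda=-\tfrac12$, and I would analyse the resulting sector structure by a quasi-homogeneous blow-up.
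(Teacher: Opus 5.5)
Your proposal is correct and follows the paper's proof essentially step for step: the same coordinates $(\xi,\eta)=(x,\Phi(x,y))$, the same identity $\dot\eta=(1-m)\xi\eta-\xi\,y\,\psi'(y)$, the same scaling argument, and the same application of \cite[Thm.~3.5]{DLA2006} with $A\equiv0$. Your use of the exact relation $\psi(y)=\eta-\tfrac{1-m}{2}\xi^{2}$ is a slightly cleaner route to the weight-$\ge5$ remainder than the paper's linear inversion.

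The fallback for $m=2$ is unnecessary. The hyperbolic-plus-elliptic case of that theorem is stated with the non-strict inequality $b^{2}+4a(n+1)\ge0$. It also requires that the exponent $n$ of $G$ be odd, since for even $n$ the same inequality yields a degenerate saddle. Here $n=1$, so the condition holds, but you should state it explicitly.
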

	
	\begin{proof}
		With $\xi=x$ and $\eta=\Phi(x,y)$ we have $\dot\xi=\eta$ and
		\[
		\dot\eta=\Phi_{x}\Phi+\Phi_{y}\dot y
		=(1-m)\xi\,\eta-\xi\,y\,\psi'(y).
		\]
		Since $\psi'(0)\neq0$, the equation
		$\eta=\tfrac{1-m}{2}\xi^{2}+\psi(y)$ determines $y=y(\xi,\eta)$ with
		$y=\psi'(0)^{-1}\bigl(\eta-\tfrac{1-m}{2}\xi^{2}\bigr)+O(2)$, whence
		$y\,\psi'(y)=\eta-\tfrac{1-m}{2}\xi^{2}+O(2)$ and
		\[
		\dot\eta=(1-m)\xi\eta
		-\xi\bigl(\eta-\tfrac{1-m}{2}\xi^{2}\bigr)+\cdots
		=\tfrac{1-m}{2}\xi^{3}-m\,\xi\eta+\cdots,
		\]
		the omitted terms having quasi-homogeneous weight $\ge5$.  That no
		term of weight $4$ occurs is not an accident of truncation: every term
		of $\dot\eta$ carries a factor $\xi$, and $y\,\psi'(y)$ is a power
		series in $y$ alone, while $y$ has weight~$2$ by the displayed
		inversion; hence $\dot\eta$ is $\xi$ times a series in a weight-two
		quantity and only the odd weights $3,5,7,\dots$ appear.  Under
		$\xi\mapsto\alpha\xi$, $\eta\mapsto\beta\eta$, $t\mapsto\gamma t$
		with $\beta\gamma=\alpha$ one has
		$a_{3}\mapsto\gamma^{2}\alpha^{2}a_{3}$ and
		$b_{1}\mapsto\gamma\alpha b_{1}$, so $b_{1}^{2}$ and $8a_{3}$ scale by
		the same positive factor and $\operatorname{sign}(b_{1}^{2}+8a_{3})$
		is invariant.  For the conclusion we apply the classification of
		nilpotent singular points in the form of \cite[Thm.~3.5]{DLA2006},
		which goes back to Andreev and to \cite{Dumortier1977}.  In its
		notation one writes the germ as $\dot\xi=\eta+A(\xi,\eta)$,
		$\dot\eta=B(\xi,\eta)$, lets $\eta=f(\xi)$ solve $\eta+A=0$ and sets
		$F(\xi)=B(\xi,f(\xi))=a\,\xi^{\mathfrak{m}}+o(\xi^{\mathfrak{m}})$,
		$G(\xi)=(\partial_{\xi}A+\partial_{\eta}B)(\xi,f(\xi))
		=b\,\xi^{\mathfrak{n}}+o(\xi^{\mathfrak{n}})$.  Our reduced germ
		\eqref{eq:3jet} has $A\equiv0$, hence $f\equiv0$, and
		\[
		F(\xi)=a_{3}\xi^{3}+\cdots,\qquad G(\xi)=b_{1}\xi+\cdots,
		\]
		so the dictionary is exact, with no rescaling and no sign change:
		$\mathfrak{m}=3$, $a=a_{3}$, $\mathfrak{n}=1$, $b=b_{1}$.  Theorem~3.5 of
		\cite{DLA2006}, case $\mathfrak{m}$ odd with $a<0$, then reads: if
		$\mathfrak{m}<2\mathfrak{n}+1$, or $\mathfrak{m}=2\mathfrak{n}+1$ and
		$b^{2}+4a(\mathfrak{n}+1)<0$, the origin is a centre or a focus; if
		$\mathfrak{n}$ is odd and either $\mathfrak{m}>2\mathfrak{n}+1$ or
		$\mathfrak{m}=2\mathfrak{n}+1$ and $b^{2}+4a(\mathfrak{n}+1)\ge0$, the local
		phase portrait consists of one hyperbolic and one elliptic sector.
		Here $\mathfrak{m}=3=2\mathfrak{n}+1$, $\mathfrak{n}=1$ is odd, and
		$b^{2}+4a(\mathfrak{n}+1)=b_{1}^{2}+8a_{3}$, which is
		\eqref{eq:discriminant}.  Since a square is never negative, the
		second alternative holds for every $m>1$.
	\end{proof}
	
	For the dilatonic model $m=4$: the vacuum has
	\[
	a_{3}=-\tfrac32,\qquad b_{1}=-4,\qquad b_{1}^{2}+8a_{3}=4>0,
	\]
	so it is an elliptic nilpotent point.  Explicitly, the reduction of
	\eqref{eq:model} at $\Lambda=0$ is
	\begin{equation}
		\dot x=y,
		\qquad
		\dot y=-\tfrac32x^{3}-4xy
		+\tfrac{1}{2Q^{2}}xy^{2}+\tfrac{3}{2Q^{2}}x^{3}y
		+\tfrac{9}{8Q^{2}}x^{5}+\cdots
	\end{equation}
	\begin{figure}[!t]
		\centering
		\includegraphics[width=\textwidth]{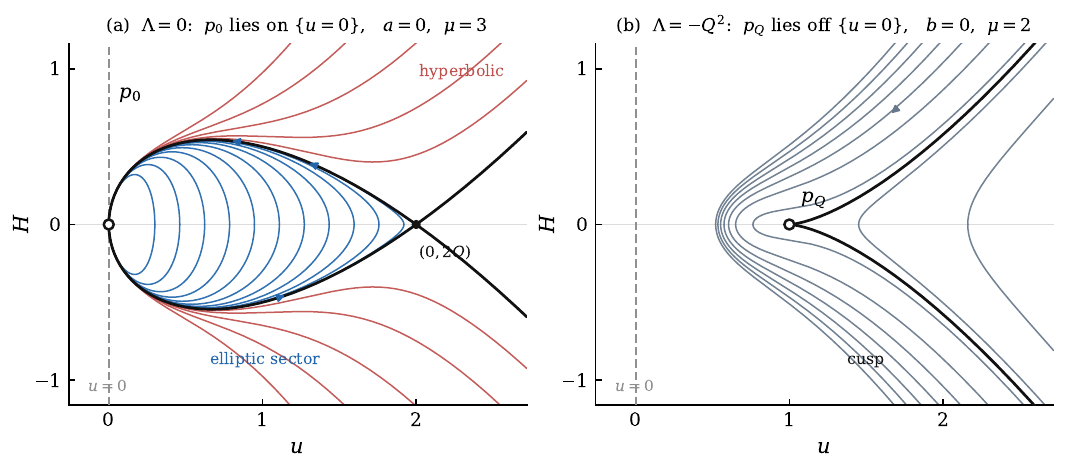}
		\caption{Orbits of \eqref{eq:model} for $Q=1$, drawn as exact level
			sets \eqref{eq:levelcubic} of the mass function
			\eqref{eq:Iexplicit}; nothing is integrated numerically.
			\textbf{(a)} $\Lambda=0$.  The level polynomial factors as
			$u\,(Cu^{2}-u+Q)$, so every orbit passes through the vacuum
			$p_{0}$: the blue curves are the one-parameter family of
			homoclinic trajectories forming the elliptic sector of
			Theorem~\ref{thm:elliptic}.  The separatrix $C=1/(4Q)$ (black)
			is the level on which the quadratic factor acquires a double
			root, at the hyperbolic saddle $(0,2Q)$; beyond it orbits escape
			(red), which is the hyperbolic sector.  \textbf{(b)}
			$\Lambda=-Q^{2}$.  The cusp at $p_{Q}$, off the divisor, sitting
			on the level $C=1/(3Q)$ whose double root is at $u=Q$; there
			$F_{1}(0,u)=-\tfrac12(u-Q)^{2}$, so the equilibrium is double,
			which is $\mu_{p_{Q}}=2$.  The dashed line is the invariant
			divisor $\{u=0\}$; only the physical sector $u>0$ is shown.}
		\label{fig:portraits}
	\end{figure}
	
	Geometrically, an elliptic point has one elliptic and one hyperbolic
	sector: there is a one-parameter family of orbits that leave the vacuum
	and return to it, alongside orbits that escape.  In the cosmological
	reading this is a family of solutions homoclinic to the static vacuum,
	the natural dynamical signature of the mass function
	\eqref{eq:Iexplicit} taking a degenerate critical value there.
	
	\subsection{The two portraits, drawn exactly}\label{sec:figure}
	
	That homoclinic family can be exhibited rather than described, and
	without integrating anything.  Because the model is integrable, its
	orbits are the level sets of the mass function, and fixing a level
	$I=C$ in \eqref{eq:Iexplicit} and solving for $H^{2}$ puts them in
	closed form:
	\begin{equation}\label{eq:levelcubic}
		H^{2}=C\,u^{3}+\frac{\Lambda}{3}+Qu-u^{2},
		\qquad C\in\R ,
	\end{equation}
	one cubic in $u$ for each orbit.  Figure~\ref{fig:portraits} is a plot
	of \eqref{eq:levelcubic}; every curve in it is an algebraic curve, and
	no trajectory has been integrated numerically.

	Three features of \eqref{eq:levelcubic} carry the content of the
	figure, and each is a one-line consequence of the closed form.
	
	At the vacuum, $\Lambda=0$ and the right-hand side factors as
	$u\,(Cu^{2}-u+Q)$, which vanishes at $u=0$ for \emph{every} $C$.  So
	every level curve passes through $p_{0}$: the homoclinic family of
	Theorem~\ref{thm:elliptic} is not something to be located numerically,
	it is the whole pencil of level sets, parametrized by $C$.  This is the
	elliptic sector, panel~(a).
	
	The family closes at $C=1/(4Q)$, where the discriminant of the
	quadratic factor vanishes and the double root sits at $u=2Q$.  That
	point is the second equilibrium of the vacuum branch: at $\Lambda=0$
	the equilibria are $(0,0)$ and $(0,2Q)$, and
	$\tr DX(0,2Q)=0$, $\dt DX(0,2Q)=-2Q^{2}<0$, so $(0,2Q)$ is a
	hyperbolic saddle.  Its stable and unstable separatrices are the
	crossing curves of panel~(a), and they separate the loops from the
	escaping orbits of the hyperbolic sector.
	
	At the horizon point, $\Lambda=-Q^{2}$ and the level through $p_{Q}$ is
	$C=1/(3Q)$, whose double root is at $u=Q$; this is the cusp,
	panel~(b).  Here the figure displays a multiplicity directly, since
	$F_{1}(0,u)=-\tfrac12(u-Q)^{2}$ has a double zero, so $p_{Q}$ is a
	double equilibrium and $\mu_{p_{Q}}=2$.  The number obtained from
	Newton intercepts in Section~\ref{sec:exactmult} can be read off the
	picture as the collision of two equilibria.
	
	Read together, the two panels are the dichotomy of
	Theorem~\ref{thm:C}: one family, two nilpotent points, two
	complementary degeneracies, separated by the divisor $\{u=0\}$ drawn
	dashed in both.
	
	\begin{remark}\label{rem:elliptic-stratum}
		The perfect square in \eqref{eq:discriminant} deserves emphasis.  The
		discriminant that separates the focus case from the
		hyperbolic-plus-elliptic case is generically an indefinite quadratic
		in the two normal-form coefficients; here it collapses to
		$(m-2)^{2}$, a function of the single number that measures the weight
		of the inverse integrating factor, and it is nonnegative for
		\emph{every} model in the class.  The focus and centre cases are
		therefore unreachable in this class: the vacuum always has an elliptic
		sector.
		
		The value $m=2$ is not an exception to that conclusion, but it is
		special in the desingularization.  The quasi-homogeneous blow-up
		$\xi=r$, $\eta=r^{2}z$ of \eqref{eq:3jet}, after division by $r$,
		gives
		$\dot r=rz$, $\dot z=a_{3}+b_{1}z-2z^{2}$, whose singular points on
		the exceptional divisor are the roots of $2z^{2}-b_{1}z-a_{3}=0$,
		with discriminant $b_{1}^{2}+8a_{3}=(m-2)^{2}$.  For $m\neq2$ these
		are two distinct hyperbolic points (a saddle and a node); at $m=2$
		they merge into a single saddle-node.  The topological type is
		unchanged (\cite[Thm.~3.5]{DLA2006} covers the equality case),
		but the finite-codimension unfolding theory of \cite{DRS1991}, which
		assumes the strict inequality, does not apply at $m=2$.  We therefore
		claim the topological statement for all $m>1$ and the identification
		with the codimension-three elliptic stratum only for $m\neq2$.
	\end{remark}
	
	\begin{remark}[The excluded type reappears at the boundary]
		Corollary~4.6 of \cite{Companion} states that for Kolmogorov systems
		with $\MV\le2$ the nilpotent singularities of saddle, focus and
		elliptic type cannot occur at isolated interior equilibria, since all
		of them require $\mu\ge3$; only cusps occur.  Theorem~\ref{thm:elliptic}
		shows that the elliptic type is exactly what one finds at the corner
		of a one-invariant-axis system.  The two results are therefore
		complementary rather than in conflict, and the boundary is where the
		forbidden types live.
	\end{remark}
	
	\section{Algorithm and symbolic verification}\label{sec:algo}
	
	Every statement above is checkable by a short symbolic computation, and
	we describe the pipeline in the order it should be run.
	
	\begin{enumerate}[leftmargin=2.2em]
		\item \textbf{Invariants.}  Compute $DX$, $\tau=\tr DX$,
		$\delta=\dt DX$ and their gradients.  This is $O(1)$ in the
		number of variables and requires no eigenvector.
		\item \textbf{Nilpotent locus.}  Solve
		$\{F_{1}=F_{2}=0,\ \tau=0,\ \delta=0\}$ together with
		$DX\neq0$.  For \eqref{eq:model} this is a $\operatorname{Groebner}$
		computation returning the two branches $\Lambda=0$ and
		$\Lambda=-Q^{2}$.
		\item \textbf{Coefficients.}  Apply \eqref{eq:intrinsic}: one null
		vector and two directional derivatives.
		\item \textbf{Integrating factor.}  Solve
		$X\!\cdot\!\nabla V=V\operatorname{div}X$ in the monomial ansatz
		$V=y^{m}$; this is a single linear condition on $m$.  Then
		integrate \eqref{eq:firstintegral}.
		\item \textbf{Multiplicity.}  Do \emph{not} compute
		$\dim_{\C}\C\{x,y\}/(F_{1},F_{2})$ by standard bases if the
		second component factors: use additivity,
		$I_{0}(F_{1},x^{m_{1}}y^{m_{2}}U)=m_{1}\ell_{y}+m_{2}\ell_{x}$,
		which costs two univariate order computations.  This is the
		practical payoff of Proposition~\ref{prop:monomial}: the local
		ring is never formed.
		\item \textbf{Type.}  Read $a_{3}$, $b_{1}$ off the quasi-homogeneous
		$3$-jet and evaluate $b_{1}^{2}+8a_{3}$.
	\end{enumerate}
	
	\noindent
	A \textsc{Wolfram Language} implementation is supplied as
	\texttt{BTDilatonNewton.wl}.  Its key design choices are worth naming.
	Steps~1--3 use \texttt{D} and \texttt{NullSpace} on exact symbolic
	matrices and never call \texttt{Eigenvectors}, which would introduce
	radicals and branch cuts for no benefit.  Step~2 uses
	\texttt{GroebnerBasis} with the saturation variable declared as an
	elimination variable, saturating by $Q-u\neq0$ via the Rabinowitsch
	trick, rather than \texttt{Solve}, so that the parameter stratification
	is returned as an ideal rather than as a list of special-cased branches.
	Step~5 does not perform a generic local-algebra computation.  Because the
	invariant divisor makes $F_{2}$ a monomial times a unit, the local
	intersection multiplicity is obtained from two univariate order
	computations, the coordinate-axis intercepts of
	Proposition~\ref{prop:monomial}, evaluated with
	\texttt{Exponent[\,\ldots,\ Min]}; this is not a programming
	optimization but a structural simplification dictated by the mathematics
	of the problem.  Step~6 uses \texttt{Series} with the
	quasi-homogeneous weighting imposed by hand, because
	\texttt{Normal[Series[...]]} with a single scaling parameter
	$\varepsilon$ under $x\to\varepsilon x$, $y\to\varepsilon^{2}y$ is the
	cheapest way to isolate the weight-$3$ part; \texttt{NormalForm}-style
	packages are unnecessary here and would obscure which coefficient is the
	invariant one.
	
	For comparison, the same symbolic pipeline can be assembled in other
	computer algebra systems, but the local-algebra route is more involved.
	The standard computation of an isolated multiplicity requires a
	local-algebra or equivalent zero-dimensional quotient calculation,
	whereas the present implementation replaces that step by two univariate
	order computations using the factorization of $F_{2}$.  The latter is not
	a software-specific shortcut: it is the structural calculation dictated
	by the divisor, and the same structure that makes the theorems hold is
	what makes the computation cheap.
	
	\section{Conclusions}\label{sec:conclusions}
	
	The dilatonic family \eqref{eq:model} is not merely a new place to apply
	the intrinsic identities \eqref{eq:intrinsic}.  It is a family in which
	the two ways of losing Bogdanov--Takens nondegeneracy are both forced,
	by two different structures that turn out to be two views of one
	divisor:
	\[
	\{u=0\}
	\;=\;
	\{\text{invariant axis of the flow}\}
	\;=\;
	\{\text{zeros of the inverse integrating factor }u^{4}\}.
	\]
	Off the divisor, integrability forces $b=0$ (Theorem~\ref{thm:B}); on
	it, the factorization of $\dt DX$ forces $a=0$ and $\mu\ge3$
	(Theorem~\ref{thm:A}).  Every nilpotent equilibrium of the class
	therefore satisfies $ab=0$, so the generic two-parameter
	Bogdanov--Takens unfolding is unavailable within it
	(Corollary~\ref{cor:nogenericBT}); and the geometric reason is the
	coincidence displayed above, that the zero divisor of the inverse
	integrating factor is simultaneously an invariant divisor of the flow.
	
	The two boundary multiplicities are $\mu_{p_{0}}=3$ and
	$\mu_{p_{Q}}=2$, computed exactly by \eqref{eq:monomialformula}, an
	elementary identity that needs no nondegeneracy hypothesis; that they
	also equal mixed covolumes of Newton diagrams, computable by
	\eqref{eq:minformula}, is an interpretation, and a general
	Bernstein--Kushnirenko multiplicity theorem for non-convenient Newton
	diagrams is not established here (Remark~\ref{rem:covolcaveat}).  At the
	vacuum, $\mu=3$ together with
	\[
	b_{1}^{2}+8a_{3}=(m-2)^{2}\;\ge\;0
	\]
	places the germ in the hyperbolic-plus-elliptic case of the
	classification for every $m>1$, with no exception
	(Theorem~\ref{thm:elliptic}).  For the physical value $m=4$ the
	discriminant equals $4$, and the elliptic sector consists of a
	one-parameter family of trajectories that leave the static vacuum and
	return to it, a family of solutions homoclinic to the vacuum of the
	dilaton-gravity model.
	
	Several extensions suggest themselves: broader classes of
	inverse-integrating-factor divisors, in which $\{R=0\}$ need not be a
	single coordinate axis and one expects the nilpotent locus to stratify
	by the order of vanishing of $R$; higher boundary multiplicities;
	versality within the restricted deformation space of integrable fields
	with a prescribed invariant divisor, rather than in the full space of
	vector fields; and a general Bernstein--Kushnirenko theory for
	non-convenient Newton diagrams, for which the reversed
	Alexandrov--Fenchel inequality of coconvex bodies
	\cite{KhovanskiiTimorin2014} suggests that the boundary statement should
	be a lower bound on multiplicity.  These lie beyond the scope of the
	present work.
	
	The scope of the two hypotheses should be stated with the same care.
	For the family \eqref{eq:model} both are verified explicitly.  (H1)
	holds because $\dot u=-Hu$, so the divisor $\{u=0\}$ is invariant; on
	the physical branch $b>0$ one has $u>0$, while the analysis is carried
	out on the full analytic $(H,u)$-plane.  (H2) holds because the inverse
	integrating factor computed in Section~\ref{sec:integrable} is
	$R=u^{4}/2$, whose zero set is that same divisor.  What does not
	transfer automatically to other models is (H2): the existence of a
	conserved mass function gives a first integral, but not the
	localization $\{R=0\}\subseteq\{y=0\}$, which must be checked case by
	case (Remark~\ref{rem:codim}).
	
	For the model at hand the chain is complete.  Grumiller's dilaton
	gravity fixes the Kantowski--Sachs system \eqref{eq:model}; that system
	carries a conserved mass function and an invariant divisor $\{u=0\}$;
	those two structures force $ab=0$; hence no versal two-parameter
	Bogdanov--Takens point exists in the class; and at the vacuum the
	resulting degeneracy is realized as an elliptic nilpotent singularity
	with a one-parameter family of homoclinic trajectories.  The physical
	consequence is therefore structural rather than accidental.  The absence
	of a versal Bogdanov--Takens point is not the result of tuning a
	parameter to a special value: it follows from the integrability and
	invariant-divisor structure that the Kantowski--Sachs dynamics inherits
	from the dilaton-gravity model.  The qualitative phase portrait of the
	vacuum is thus constrained by the same geometric structure that supplies
	the conserved mass.
	
	\section*{Acknowledgements}
	E. Chan-L\'opez acknowledges support from SECIHTI through the
	``Estancias Posdoctorales por M\'exico'' program (CVU 422090). A. Mart\'in-Ruiz acknowledges
	financial support from UNAM-PAPIIT (project IG100224), UNAM-PAPIME
	(project PE109226), SECIHTI (project CBF-2025-I-1862), and the Marcos
	Moshinsky Foundation.
	
	\section*{Ethics declarations}
	\subsection*{Conflict of interest}
	The author declares no conflict of interest.
	
	\subsection*{Data availability}
	No datasets were generated or analysed during this study.  The
	\textsc{Wolfram Language} verification script described in
	Section~\ref{sec:algo} is provided as supplementary material; it is
	self-contained and requires no further software.
	

\end{document}